\newcommand{\Rd}{{\mathbb{R}^d}}
\newcommand{\rd}{{\mathbb{R}^d}}
\newcommand{\RR}{{\mathbb{R}}}
\newcommand{\R}{\mathbb{R}}
\newcommand{\E}{\mathbb{E}}
\newcommand{\Ex}{{\mathbb{E}^x}}
\newcommand{\px}{{\mathbb{P}^x}}
\newcommand{\p}{{\mathbb{P}}}
\newcommand{\D}{\mathcal{D}}
\newtheorem{theorem}{Theorem}[section]
\newtheorem{lemma}[theorem]{Lemma}
\newtheorem{proposition}[theorem]{Proposition}
\theoremstyle{definition}
\newtheorem{example}[theorem]{Example}
\theoremstyle{remark}
\newtheorem{remark}[theorem]{Remark}
\newtheorem{acknowledgment}{Acknowledgment}
\numberwithin{equation}{section}
\definecolor{tl}{rgb}{0.7,0.1,0.2}
\newcommand{\TL}[1]{{\textcolor{black}{#1}}}
\newcommand{\mmm}[1]{{\textcolor{black}{#1}}}
\newcommand{\MMM}[1]{{\textcolor{black}{#1}}}
\newcommand{\BB}[1]{{\textcolor{black}{#1}}}
\newcommand{\MCUBED}[1]{{\textcolor{black}{#1}}}
\newcommand{\fe}[1]{{\textcolor{black}{#1}}}
\newcommand{\FE}[1]{{\textcolor{black}{#1}}}
\begin{document}

\title[Space-time fractional Dirichlet problems]{Space-time fractional Dirichlet problems}

\author{Boris Baeumer}
\address{Boris Baeumer, Department of Mathematics and Statistics, University of Otago,
Dunedin, New Zealand}
\email{bbaeumer@maths.otago.ac.nz}

\author{Tomasz Luks}
\address{Tomasz Luks, Institut f\"ur Mathematik, Universit\"at Paderborn, Warburger Strasse 100, D-33098 Paderborn, Germany}
\email{tluks@math.uni-paderborn.de}

\author{Mark M. Meerschaert}
\address{Mark M. Meerschaert, Department of Statistics and Probability, Michigan State University}
\email{mcubed@stt.msu.edu}

\begin{abstract}
This paper establishes explicit solutions for fractional diffusion problems on bounded domains.  It also gives stochastic solutions, in terms of Markov processes time-changed by an inverse stable subordinator whose index equals the order of the fractional time derivative.  Some applications are given, to demonstrate how to specify a well-posed Dirichlet problem for space-time fractional diffusions in one or several variables.  \fe{This solves an open problem in numerical analysis.}
\end{abstract}

\maketitle

\section{Introduction}
Fractional derivatives were invented by Leibnitz in 1695 \cite{MainardiWaves}.  Recently they have found new applications in many areas of science and engineering, see for example these books \cite{Herrmann,KRS,mainardi1997fractals,MainardiWaves,FCbook,Metzler2000,Metzler2004a,Podlubny}.  In particular, partial differential equations that employ fractional derivatives in time are used to model sticking and trapping, a kind of memory effect \cite{FADEmix,Zsolution,limitCTRW,hittingTime,FCAAplwe}.  For practical applications, it is often necessary to employ numerical methods to solve these time-fractional partial differential equations.  A variety of effective numerical schemes have been developed to solve fractional partial differential equations on a bounded domain, along with proofs of stability and convergence, see for example \cite{Deng4thOrder,Deng,DFF,SIAMreview2012,Liuf07,Liuq11,CNfde}.  An important open problem in this area is to show that these problems are well-posed, see \fe{discussion in} Defterli et al. \cite{FCAAnonlocal}.

In this paper, we take a step in that direction, by establishing explicit solutions to a broad class of time-fractional Cauchy problems \cite{fracCauchy} $\partial_t^\beta u(x,t)=Lu(x,t); u(0)=f(x)$ on a regular bounded domain $\Omega$ in $d$-dimensional Euclidean space, where $\partial_t^\beta$ is the Caputo fractional derivative of order $0<\beta<1$ \cite{MainardiWaves,FCbook}, and $L$ is the semigroup generator of some Markov process on $\R^d$ \cite{ABHN2,BSW,Pazy}.  In particular, we allow the operator $L$ to be nonlocal in space.  This includes the cases where $L$ is a space-fractional derivative in one dimension \cite{bensonappl}, a tempered fractional derivative \cite{TemperedLM}, the fractional Laplacian in $d\geq 1$ dimensions \cite{ChenSong97}, or a multiscaling fractional derivative in $d>1$ dimensions \cite{multiFADE}.  One important outcome of this research is to describe \fe{the} appropriate version of these nonlocal operators on a bounded domain.

Our method of proof uses a fundamental result \cite[Theorem 3.1]{fracCauchy} from the theory of semigroups, along with some ideas from the theory of Markov processes.  This probabilistic method also establishes stochastic solutions for these equations, i.e., we describe a stochastic process whose probability density functions solve the time-fractional and space-nonlocal diffusion problem on the bounded domain.  This extends the recent work of Chen et al.\ \cite{CMN} where $L$ is the (nonlocal) fractional Laplacian, and Meerschaert et al. \cite{MNV} where $L$ is a (local) diffusion operator.  \fe{However, since we do not assume that $L$ is self-adjoint in this paper, standard spectral theory does not apply, and hence our approach is quite different.}

\section{The generator of a killed Feller process}\label{sec2}
Let $X:=\{X_t\}_{t\geq 0}$ be a Feller process on $\Rd$. That is, for any $x\in \Rd$, we assume that the linear operators defined by $P_tf(x):=\Ex [f(X_t)]$ for all $t\geq 0$ form a strongly continuous, \TL{contraction} semigroup on the Banach space $C_0(\Rd)$ of continuous functions $f:\Rd\to\RR$ endowed with the supremum norm $\|f\|:=\sup\{|f(x)|:x\in \Rd\}$, \fe{so that} $P_tf\in C_0(\Rd)$ for all $f\in C_0(\Rd)$. By strongly continuous we mean that $\|P_t f-f\|\to 0$ as $t\searrow 0$ for all $f\in C_0(\Rd)$, and by \TL{contraction} we mean that $\|f\|\leq 1$ implies $\|P_t f\|\leq 1$ for all $f\in C_0(\Rd)$. Then the infinitesimal generator of $X$ is defined by
\begin{equation}\label{GenDef}
Lf:=\lim_{t\searrow0}\frac{P_tf-f}{t} \quad\text{in $C_0(\Rd)$.}
\end{equation}
We denote by $\D(L)$ the domain of $L$ in $C_0(\Rd)$.  Since $f$ is a function of $x\in\Rd$, we can also write the {\it pointwise formula}
\begin{equation}\label{GenDefPtwise23}
L^\sharp f(x):=\lim_{t\searrow0}\frac{\Ex [f(X_t)]-f(x)}{t} \quad\text{in $\Rd$.}
\end{equation}
Since convergence in $C_0(\Rd)$ implies pointwise convergence in $\Rd$, we have $Lf(x)=L^\sharp f(x)$ for all $f\in \D(L)$ and $x\in\Rd$.
\FE{Conversely, an application of the Maximum Principle \cite[Lemma 1.28]{BSW} shows that, for any Feller semigroup, if \eqref{GenDefPtwise23} holds for each $x\in\Rd$, and if the limit $L^\sharp f\in C_0(\Rd)$, then \eqref{GenDef} also holds \cite[Theorem 1.33]{BSW}.}

\FE{This leads to an explicit pointwise formula for the generator: Let $C^k_0(\R^d)$ denote the set of $f\in C_0(\Rd)$ whose derivatives up to order $k$ also belong to $C_0(\Rd)$, and write $C_c^\infty(\R^d)$ for the functions in $C^\infty_0(\R^d)$ that vanish off a compact set.   If $C_c^\infty(\R^d)\subset \D(L)$,
then \cite[Theorem 2.37]{BSW} shows that for any $f\in  C^2_0(\R^d)$ we have $Lf(x)=L^\sharp f(x)=L^p f(x)$ for every $x\in\Rd$, where the pseudodifferential operator
\begin{equation}\label{PDO}\begin{split}
  L^p f(x):=&
  -c(x)f(x)+l(x)\cdot\nabla f(x)+\nabla\cdot Q(x)\nabla f(x)\\
  &+\int_{\R^d\setminus\{0\}} \left(f(x+y)-f(x)-\nabla f(x)\cdot yI_{B_1}(y)\right)\,N(x,dy)
  \end{split}
\end{equation}
for some $c(x)\ge 0$, $l(x)\in\R^d$, $Q(x)\in \R^{d\times d}$ symmetric and positive definite, $N(x,\cdot)$ a positive measure satisfying $\int_{\R^d\setminus\{0\}}\min(|y|^2,1)N(x,dy)<\infty$, and $B_1$ the unit ball.  The goal of this section is to apply this same procedure to killed Feller processes on a bounded domain.}

\begin{remark}\label{RemBSWcore}
In applications, there are no generally useful sufficient conditions that guarantees $C_c^\infty(\R^d)\subset \D(L)$, so one has to check this on a case-by-case basis, see for example \cite[Chapter 3]{BSW}.  \FE{In the special case of an infinitely divisible L\'evy process $X_t$, where $c=0$ and $l,Q,N$ do not depend on $x\in\R^d$, it follows from Sato \cite[Theorem 31.5]{SAT} that $C_c^\infty(\R^d)\subset C^2_0(\R^d)\subset \D(L)$.  Hence we always have $Lf(x)=L^\sharp f(x)=L^p f(x)$ for all $f\in  C^2_0(\R^d)$ and all $x\in\Rd$ in this case.}
\end{remark}

Let $\Omega\subset\Rd$ be a bounded  domain (connected open set) and let $C_0(\Omega)$ denote the set of continuous real-valued functions on \FE{$\Omega$} that tend to zero as $x\in\Omega$ approaches the boundary.  Then $C_0(\Omega)$ is a Banach space with the supremum norm. {For a Feller process $X_t$ on $\RR^d$} we define the first exit time from $\Omega$ for $X_t$ by
\begin{equation}\label{tauDdef}
\tau_\Omega=\inf\left\{t>0:X_t\notin \Omega\right\} .
\end{equation}
Let $X^\Omega_t$ denote the killed process on $\Omega$, i.e.,
\begin{equation}\label{XDdef}
X_t^\Omega=\begin{cases}
		X_t, & \ t<\tau_\Omega,        \\
		\partial, & t\geq\tau_\Omega,
		\end{cases}
\end{equation}
where $\partial$ denotes a cemetery point.
We say that a boundary point $x$ of $\Omega$ is regular for $\Omega$ if $\px(\tau_\Omega=0)=1$. We say that $\Omega$ is regular if every boundary point of $\Omega$ is regular for $\Omega$.  We say that a Markov process $X_t$ on $\rd$ or its semigroup $P_tf(x)=\E_x[f(X_t)]$ is {\it strong Feller} if for any bounded measurable real-valued function $f$ with compact support on $\Rd$, $P_tf(x)$ is bounded and continuous on $\Rd$.  We say that a Feller process (resp, semigroup)  is {\it doubly Feller} if it also has the strong Feller property (e.g., see \cite{SW12}).

\begin{lemma}\label{lem:FellerProp}
\mmm{Suppose that $X_t$ is a doubly Feller process on $\Rd$ and that $\Omega$ is regular.} Then
\begin{equation}\label{PtDdef}
P_t^\Omega f(x):=\Ex [f(X_t^\Omega)],\quad x\in \Omega,\ t\geq 0
\end{equation}
defines a Feller semigroup on $C_0(\Omega)$.
\end{lemma}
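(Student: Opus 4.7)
The plan is to verify the four axioms of a Feller semigroup for $(P_t^\Omega)_{t\ge 0}$ on $C_0(\Omega)$: the semigroup property, contractivity, invariance of $C_0(\Omega)$, and strong continuity at $t=0$. The semigroup identity $P_{t+s}^\Omega=P_t^\Omega P_s^\Omega$ follows from the strong Markov property of $X$ at time $s$, together with the shift relation $\tau_\Omega=s+\tau_\Omega\circ\theta_s$ on $\{s<\tau_\Omega\}$; contractivity is immediate under the convention $f(\partial)=0$, since $|f(X_t^\Omega)|\le\|f\|$.

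For the remaining properties I would work with the extension by zero $\bar f\in C_0(\R^d)$ of $f\in C_0(\Omega)$, which is legitimate because $f$ vanishes at $\partial\Omega$; note that $\bar f$ has compact support in $\overline{\Omega}$. This gives the decomposition
\begin{equation*}
P_t^\Omega f(x)=P_t\bar f(x)-\E^x\!\left[\bar f(X_t);\,\tau_\Omega\le t\right],\qquad x\in\Omega.
\end{equation*}
The first term lies in $C_0(\R^d)$ by the Feller property of $X$. For the second, I would invoke a Chung--Song style inheritance result (as referenced in \cite{SW12}): the doubly Feller property of $X$ implies that $P_t^\Omega$ is itself strong Feller on $\Omega$. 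The underlying idea is to split at an auxiliary time $s\in(0,t)$ and write $\E^x[\bar f(X_t);\tau_\Omega\le s]=\E^x[(P_{t-s}\bar f)(X_s);\tau_\Omega\le s]$, then use the strong Feller property of $P_s$ (applied to a bounded function built from the continuous $P_{t-s}\bar f$) to transfer continuity; the remainder $\E^x[\bar f(X_t);s<\tau_\Omega\le t]$ is uniformly small as $s\uparrow t$. This yields continuity of $P_t^\Omega f$ on $\Omega$.

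For vanishing at the boundary: if $y\in\partial\Omega$ is regular then $\p^y(\tau_\Omega=0)=1$, so $P_t^\Omega f(y)=0$, and for any sequence $x_n\in\Omega$ with $x_n\to y$ the bound $|P_t^\Omega f(x_n)|\le\|f\|\cdot\p^{x_n}(\tau_\Omega>t)$ reduces matters to proving $\p^{x_n}(\tau_\Omega>t)\to 0$. This continuity of the tail of $\tau_\Omega$ at a regular boundary point is the \emph{main technical obstacle}; I would derive it by combining the regularity assumption with the doubly Feller property, e.g.\ by dominating $\p^{x_n}(\tau_\Omega>t)$ using the exit distribution from a small ball around $y$ and passing to the limit through the strong Feller property.

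Finally, strong continuity at $t=0$ proceeds from pointwise convergence $P_t^\Omega f(x)\to f(x)$ for each $x\in\Omega$ (dominated convergence, using $\tau_\Omega>0$ $\p^x$-a.s.\ because $\Omega$ is open and $X_0=x\in\Omega$, together with strong continuity of $P_t$ on $C_0(\R^d)$ via the decomposition above). To upgrade to uniform convergence in $C_0(\Omega)$, I would fix $\varepsilon>0$, choose a compact $K\subset\Omega$ on which $|f|<\varepsilon$ outside $K$, obtain a uniform positive lower bound on $\tau_\Omega$ in distribution over $K$ (again via the doubly Feller property), and absorb the exterior into an $O(\varepsilon)$ error. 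The combination of these four steps then delivers the Feller semigroup property.
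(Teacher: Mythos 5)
The paper's own proof is a single citation: the theorem on p.~68 of Chung \cite{Chung} states that a doubly Feller process killed on exiting a regular open set is again doubly Feller, which gives the Feller property of $P_t^\Omega$ on $C_0(\Omega)$ at once. Your proposal instead tries to reprove that inheritance theorem. The routine parts (the semigroup identity via the Markov property and $\tau_\Omega=s+\tau_\Omega\circ\theta_s$ on $\{\tau_\Omega>s\}$, contractivity, and the upgrade of strong continuity from compacta to all of $\Omega$) are fine. But the two steps that carry all the difficulty are left with genuine gaps, and the mechanisms you sketch for them would not work as written.

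First, in the continuity step the identity $\E^x[\bar f(X_t);\tau_\Omega\le s]=\E^x[(P_{t-s}\bar f)(X_s);\tau_\Omega\le s]$ is correct, but the right-hand side is \emph{not} of the form $P_s h(x)$ for any function $h$: the indicator of $\{\tau_\Omega\le s\}$ depends on the whole path up to time $s$, not on $X_s$ alone, so the strong Feller property of $P_s$ cannot be applied to it. The standard repair (essentially Chung's argument) is to compare $P_t^\Omega f$ with the continuous function $P_s\bigl(P_{t-s}^\Omega f\bigr)$, whose difference from $P_t^\Omega f$ is bounded by $\|f\|\,\p^x(\tau_\Omega\le s)$, and then note that $\sup_{x\in K}\p^x(\tau_\Omega\le s)\to0$ as $s\downarrow0$ for $K\subset\subset\Omega$ by the ball-exit estimate \cite[Theorem 5.1]{BSW}. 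Second, and more seriously, for the vanishing at a regular boundary point $y$ you propose to dominate $\p^{x_n}(\tau_\Omega>t)$ by ``the exit distribution from a small ball around $y$''; this cannot work, because a ball centred at $y\in\partial\Omega$ meets $\Omega^c$, so leaving that ball says nothing about leaving $\Omega$, and staying in $\Omega$ is compatible with leaving the ball. The correct route is to write $\p^x(\tau_\Omega>t)\le P_s\psi_s(x)$ with $\psi_s(z):=\p^z(\tau_\Omega>t-s)$, use the strong Feller property of $P_s$ to get $\limsup_{x\to y}\p^x(\tau_\Omega>t)\le P_s\psi_s(y)=\p^y(X_v\in\Omega\ \ \forall v\in[s,t])$, and then let $s\downarrow0$, using right-continuity of paths and $\p^y(\tau_\Omega=0)=1$ to conclude that the limit is zero. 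Both of these are precisely the lemmas proved in \cite{Chung}; since that is the one reference the authors invoke, the cleanest fix for your write-up is either to cite Chung's theorem outright (as the paper does) or to supply these two arguments in full.
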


\begin{proof}
Since $X_t$ is doubly Feller and $\Omega$ is regular, the theorem on page 68 of Chung \cite{Chung} implies that $X_t^\Omega$ is also doubly Feller. In particular, we have that $P_t^\Omega$ is a Feller semigroup on $C_0(\Omega)$.
\end{proof}

For any bounded domain $\Omega\subset\R^d$ we will say that $U$ is compactly contained in $\Omega$, and write $U\subset\subset \Omega$, if \TL{$\bar U$, the closure of $U$, defined as the intersection of all closed sets containing $U$, satisfies $\bar U\subset\Omega$.  Since $\Omega$ is bounded, $\bar U$ is compact for any $U\subset\subset\Omega$.}  If $f_n(x)\to f(x)$ for all $x\in \Omega$, and uniformly on $x\in U$ for any $U\subset\subset\Omega$, we say that $f_n\to f$ uniformly on compacta in $\Omega$.  If $P_t^\Omega $ is a Feller semigroup on $C_0(\Omega)$, then it has a generator
 \begin{equation}\label{GenDefOmega}
L_\Omega f:=\lim_{t\searrow0}\frac{P_t^\Omega f-f}{t} \quad\text{in $C_0(\Omega)$,}
\end{equation}
with domain $\D(L_\Omega)\subset C_0(\Omega)$.  The next result shows that this generator $L_\Omega$ can be computed using the pointwise formula \eqref{GenDefPtwise23} for the {\it original} Feller generator on $C_0(\Rd)$.  Given a function $f\in C_0(\Omega)$, we apply the formula \eqref{GenDefPtwise23} to the zero extension of $f$, i.e., we set $f(x)=0$ for all $x\notin\Omega$, to get an element of $C_0(\Rd)$.  Then we will write $L^\sharp f(x)\in C_0(\Omega)$ to mean that the function defined by \eqref{GenDefPtwise23} exists for $x\in\Omega$, is continuous on $\Omega$, and tends to zero as $x\in\Omega$ approaches the boundary.  This {\it does not} require the limit in \eqref{GenDefPtwise23} to exist for any $x\notin\Omega$.

\begin{theorem}\label{pwL}
Assume that $X_t$ is a doubly Feller process on $\R^d$, and let $\Omega\subset \R^d$ be a regular bounded domain.  Then the domain of the killed generator \eqref{GenDefOmega} is given by
\begin{equation}\begin{split}\label{domainsLE}
\D(L_\Omega)=&\{f\in C_0(\Omega):L^\sharp f\in C_0(\Omega)\} .
\end{split}\end{equation}
Also $L_\Omega f(x)=L^\sharp f(x)$ for all $x\in\Omega$, and \eqref{GenDefPtwise23} holds uniformly on compacta in $\Omega$.
 \end{theorem}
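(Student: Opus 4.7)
The plan is to exploit the decomposition
\begin{equation*}
P_t^\Omega f(x)=P_t\tilde f(x)-\E^x[\tilde f(X_t);\,\tau_\Omega\leq t],
\end{equation*}
where $\tilde f\in C_0(\R^d)$ is the zero extension of $f\in C_0(\Omega)$; this reduces the proof to controlling the overshoot term $R(x,t):=t^{-1}\E^x[\tilde f(X_t);\,\tau_\Omega\leq t]$. Once I show $R(x,t)\to 0$ uniformly on compacta in $\Omega$ as $t\downarrow 0$, both inclusions in \eqref{domainsLE} will follow by combining the $C_0(\Omega)$-analog of the Maximum Principle result of BSW (Theorem~1.33, stated for $C_0(\R^d)$ in the excerpt) applied to the killed Feller semigroup from Lemma~\ref{lem:FellerProp} with the defining relation \eqref{GenDefPtwise23} of $L^\sharp$.

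To estimate $R(x,t)$ I apply the strong Markov property at $\tau_\Omega$ to obtain
\begin{equation*}
R(x,t)=\frac{1}{t}\E^x\!\left[\mathbf{1}_{\{\tau_\Omega\leq t\}}\,P_{t-\tau_\Omega}\tilde f(X_{\tau_\Omega})\right].
\end{equation*}
Since $\Omega$ is regular, $X_{\tau_\Omega}\in\R^d\setminus\Omega$ almost surely, where $\tilde f=0$. Strong continuity of $(P_s)$ on $C_0(\R^d)$ then yields $|P_s\tilde f(X_{\tau_\Omega})|\leq\eta(s)$ with $\eta(s):=\sup_{0\leq r\leq s}\|P_r\tilde f-\tilde f\|\to 0$ as $s\downarrow 0$, and hence
\begin{equation*}
|R(x,t)|\leq\eta(t)\,\px(\tau_\Omega\leq t)/t.
\end{equation*}

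The main obstacle will be establishing $\sup_{x\in U}\px(\tau_\Omega\leq t)/t=O(1)$ as $t\downarrow 0$ for each compact $U\subset\subset\Omega$. In the purely diffusive case this probability is exponentially small, but for jump-type doubly Feller processes it is only $O(t)$, and the constant must be controlled uniformly on $U$. One expects this from the pseudodifferential formula \eqref{PDO}, since the instantaneous first-exit rate from $\Omega$ at $x\in U$ is governed by the jump intensity $N(x,\R^d\setminus(\Omega-x))$, which is bounded on $U$ by the separation of $U$ from $\partial\Omega$. Combined with $\eta(t)\to 0$, this forces $R(x,t)\to 0$ uniformly on each such $U$.

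Granted the bound $R(x,t)\to 0$, both inclusions follow. For $\D(L_\Omega)\supset\{f\in C_0(\Omega):L^\sharp f\in C_0(\Omega)\}$, if $f$ lies in the right-hand side then $t^{-1}(P_t^\Omega f(x)-f(x))=t^{-1}(P_t\tilde f(x)-\tilde f(x))-R(x,t)\to L^\sharp f(x)$ pointwise on $\Omega$; the $C_0(\Omega)$-Maximum Principle then promotes this to convergence in $C_0(\Omega)$, giving $f\in\D(L_\Omega)$ with $L_\Omega f=L^\sharp f$. For the reverse inclusion, $f\in\D(L_\Omega)$ gives $t^{-1}(P_t^\Omega f-f)\to L_\Omega f$ uniformly on $\Omega$; adding $R(x,t)\to 0$ uniformly on compacta yields $t^{-1}(P_t\tilde f(x)-\tilde f(x))\to L_\Omega f(x)$ uniformly on compacta in $\Omega$, establishing both $L^\sharp f=L_\Omega f\in C_0(\Omega)$ and the uniform-on-compacta convergence of \eqref{GenDefPtwise23}.
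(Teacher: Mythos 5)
Your proposal follows essentially the same route as the paper: the identical decomposition of $P_t^\Omega f-P_t\tilde f$ into an overshoot term, the same strong Markov plus uniform strong continuity bound $|R(x,t)|\leq\eta(t)\,\p^x(\tau_\Omega\leq t)/t$, and the same reduction of the reverse inclusion to a positive-maximum-principle argument for the killed Feller semigroup (which the paper proves directly via the resolvent of $L_\Omega$ rather than invoking the $C_0(\Omega)$ analogue of B\"ottcher--Schilling--Wang, Theorem 1.33). The one step you leave as a heuristic --- the uniform bound $\sup_{x\in U}\p^x(\tau_\Omega\leq t)/t=O(1)$ --- is exactly what the paper supplies by dominating $\tau_\Omega$ by the exit time from a ball $B(x,r)$ with $r$ less than the distance from $U$ to $\partial\Omega$ and citing the maximal inequality of B\"ottcher, Schilling and Wang (Theorem 5.1 together with Proposition 2.27(d)).
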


\begin{proof}
Since $X_t$ is a doubly Feller, it follows from Lemma \ref{lem:FellerProp} that $X_t^\Omega$ is a Feller process, whose semigroup \eqref{PtDdef} has a generator \eqref{GenDefOmega} on $C_0(\Omega)$.  Let $f\in \D(L_\Omega)$. Then there exists $g\in C_0(\Omega)$ such that
$$ g(x)=\lim_{t\to 0}\frac{P_t^\Omega f(x)-f(x)}{t}$$
for all $x\in \Omega$. Furthermore,
\begin{equation}\begin{split}
  P_t^\Omega f(x)-P_tf(x)=&\Ex f(X_t^\Omega)-\Ex f(X_t)\\
  =&\Ex[f(X^\Omega_t)I\{\tau_\Omega\ge t\}]+\Ex[f(X^\Omega_t)I\{\tau_\Omega<t\}]\\
  &-\Ex[f(X_t)I\{\tau_\Omega\ge t\}]-\Ex[f(X_t)I\{\tau_\Omega< t\}]\\
  =&f(X_{\tau_\Omega})-\Ex[f(X_t)I\{\tau_\Omega< t\}].
  \end{split}
\end{equation}
Indeed, as $X_t$ has a.s.\ right-continuous sample paths, the first and third term cancel. 
Hence \begin{equation}\begin{split}
  \frac{P_t^\Omega f(x)-f(x)}{t}-\frac{P_tf(x)-f(x)}{t}=& \frac{\Ex[(f(X_{\tau_\Omega})-f(X_t))I\{\tau_\Omega< t\}]}{t}.
  \end{split}
\end{equation}
By the Strong Markov Property \cite[Proposition 7.9]{KAL} we have
\begin{equation}\begin{split}\label{TLstar1}
\Ex[f(X_t)I\{\tau_\Omega< t\}]=&{\mathbb{E}}^x\left[{\mathbb{E}}^{X_{\tau_\Omega}}[f(X_{t-\tau_\Omega})]I\{\tau_\Omega< t\}\right].
\end{split}\end{equation}
Since $\Ex[f(X_t)]\to f(x)$ uniformly in $x\in\R^d$ as $t\to 0$, for any $\varepsilon>0$ there exists a $\delta>0$ such that
\begin{equation}\begin{split}\label{TLstar2}
\left|\Ex[(f(X_{\tau_\Omega})-f(X_t))I\{\tau_\Omega< t\}]\right|&=\left|{\mathbb{E}}^x\left[{\mathbb{E}}^{X_{\tau_\Omega}}[f(X_0)-f(X_{t-\tau_\Omega})]I\{\tau_\Omega< t\}\right]\right|\\
&\leq\varepsilon {\mathbb P}^x[\tau_\Omega< t]
\end{split}\end{equation}
for $0<t<\delta$. Given $U\subset\subset \Omega$, choose $r>0$ so that $B(x,r):=\{y\in\Rd:|x-y|< r\}\subset \Omega$ for all $x\in U$.  Let $\tau^x_r:=\inf\left\{t\geq 0:|X_t-\MMM{x}|\geq r\right\}$ for the process started at $\MMM{X_0=}\,x\in U$. Then
${\mathbb P}^x[\tau_\Omega<t]\leq{\mathbb P}^x[\tau^\MMM{x}_r<t]$, and
\MCUBED{by \cite[Theorem 5.1 and Proposition 2.27(d)]{BSW}} \TL{there exists some $M>0$ such that
\begin{equation}\label{TLstar3}
\frac{{\mathbb P}^x[\tau^\MMM{x}_r<t]}{t}<M,\quad\text{for all $x\in U$ and $t>0$}.
\end{equation}
Then
$$\frac{\left|P_t^\Omega f(x)-P_tf(x)\right|}{t}\leq \varepsilon\frac{{\mathbb P}^x[\tau^\MMM{x}_r<t]}{t}<\varepsilon M $$
for all $x\in U$ and $0<t<\delta$. Hence we have}
\begin{equation}\label{mmmUC}
\frac{P_t^\Omega f(x)-P_tf(x)}{t}\to 0 \quad\text{ uniformly on compacta in $\Omega$,}
\end{equation}
\TL{as $t\to0$}. Therefore any $f\in\TL{\D}(L_\Omega)$ is also contained in the set on the right-hand side of equation \eqref{domainsLE}, and in addition, \eqref{mmmUC} holds.

Conversely, suppose $f\in C_0(\Omega)$ and that $(P_tf(x)-f(x))/t\to g(x)$ as $t\to 0$ for some $g\in C_0(\Omega)$, for all $x\in \Omega$. As $L_\Omega$ is the generator of a contraction semigroup on $C_0(\Omega)$, its resolvent $(\lambda I-L_\Omega)^{-1}$ exists for all \TL{$\lambda>0$}, and maps $C_0(\Omega)$ onto $\D(L_\Omega)$ \TL{\cite[Chapter VII, Proposition (1.4)]{RY}}. Then there exists some $h\in \D(L_\Omega)$ such that $(I-L_\Omega)h=f-g$. \TL{By \eqref{mmmUC} applied to $h$,} $$L_\Omega h(x)-g(x)=\lim_{t\to 0}\frac{P_th(x)-h(x)-(P_tf(x)-f(x))}{t},\quad \TL{x\in\Omega}.$$ Hence, for $u=h-f$ we get
$$u(x)=\lim_{t\to 0}\frac {P_tu(x)-u(x)}{t},\quad \TL{x\in\Omega}.$$
Without loss of generality let $x_0\in\Omega$ be such that $\|u\|=\sup_{x\in \Omega}|u(x)|=u(x_0)>0$ (otherwise consider $-u$). Since $P_t$ is a contraction, $P_tu(x_0)\leq\|P_t u\|\leq\|u\|= u(x_0)$ and therefore
$$0\ge \left(P_tu(x_0)- u(x_0)\right)/t\to u(x_0)>0$$ \TL{as $t\to 0$}, which is a contradiction.
Hence $\sup_{x\in \Omega}|u(x)|=0$ and therefore $h=f$.  Thus any $f$ in the set on the right-hand side of equation \eqref{domainsLE} is also an element of $\D(L_\Omega)$.
\end{proof}

\begin{remark}
Here we sketch an alternate proof \MMM{that $L_\Omega f(x)=L^\sharp f(x)$ for all $x\in\Omega$ and }\BB{ $f\in\D(L)\cap\D(L_\Omega)$} [thanks to Zhen-Qing Chen].  Define
\[M_t^f:=f(X_t)-f(X_0)-\int_0^t Lf(X_s)ds\]
and note that, by Kallenberg \cite[Lemma 17.21]{KAL}, $M_t^f$ is a martingale for any $f\in \D(L)$, and
\[\Ex\left[M_{t\wedge\tau_\Omega}^f\right]:=\Ex\left[f(X^\Omega_t)-f(X^\Omega_0)-\int_0^t Lf(X^\Omega_s)ds\right]=0\]
for any $t>0$. Hence we have (pointwise) for any $f\in \D(L)$ and any $x\in D$ that
\begin{equation*}\begin{split}
\lim_{t\to 0+} \frac{P^\Omega_t f(x) -f(x)}{t}&=\lim_{t\to 0+} \frac{\Ex\left[f(X^\Omega_t)\right] -f(x)}{t}\\
&=\lim_{t\to 0+} t^{-1}\Ex\left[f(X^\Omega_t)-f(X^\Omega_0)\right]\\
&=\lim_{t\to 0+} t^{-1}\Ex\left[\int_0^t Lf(X^\Omega_s)\,ds\right] \\
&=\lim_{t\to 0+} t^{-1}\int_0^t \Ex\left[Lf(X^\Omega_s)\right]\,ds =Lf(x)
\end{split}\end{equation*}
assuming that $Lf(x)$ is continuous.  Here we use Lebesgue's
differentiation theorem (e.g., see \cite[Theorem 7.16]{Wheeden}) and the fact that $X^\Omega_t$ is continuous in probability.  \MCUBED{Note, however, that $\D(L_\Omega)$ typically contains functions that cannot be extended to an element of $\D(L)$, see for example Remark \ref{HarishRemark}.}
\end{remark}

Next we show that functions in $\D(L_\Omega)$ can be characterized as functions in $C_0(\Omega)$ that are locally in the domain of $L$. This will be used for \fe{explicitly} computing the killed generator.

\begin{theorem}\label{master}
Assume that $X_t$ is a doubly Feller process on $\R^d$, and let $\Omega\subset \R^d$ be a regular bounded domain.  Then
  \begin{equation}\begin{split}\label{DL_E}
    \D(L_\Omega )=
    \{f\in C_0(\Omega ):&\ \exists g\in C_0(\Omega ),(f_n)\subset \D(L) \mbox{ we have } f_n\to f \mbox{ in }C_0(\R^d) \\&
     \mbox{ and } Lf_n\to g \mbox{ unif. on compacta in } \Omega  \} ,
    \end{split}
  \end{equation}
and for $f,g$ as in \eqref{DL_E} we have $L_\Omega f=g.$
\end{theorem}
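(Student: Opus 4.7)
The plan is to establish the two inclusions of \eqref{DL_E} separately, using Theorem \ref{pwL} as the bridge between the semigroup domain $\D(L_\Omega)$ and the pointwise criterion.

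For the forward inclusion ($\D(L_\Omega) \subseteq $ RHS), given $f \in \D(L_\Omega)$, set $g := L_\Omega f$ and extend both by zero to $C_0(\R^d)$. Take the Yosida approximation $f_n := n R_n f$, where $R_n = (nI - L)^{-1}$ is the resolvent of the ambient Feller generator $L$. Standard semigroup theory gives $f_n \in \D(L)$, $f_n \to f$ in $C_0(\R^d)$, and $Lf_n = n(nR_nf - f)$. The Laplace-transform identity $nR_nf(x) = \int_0^\infty e^{-u} P_{u/n} f(x)\,du$ then yields
\[
Lf_n(x) = \int_0^\infty e^{-u}\, n\bigl(P_{u/n}f(x) - f(x)\bigr)\,du,
\]
and invoking the uniform-on-compacta convergence $(P_tf(x) - f(x))/t \to g(x)$ from Theorem \ref{pwL}, I would split the $u$-integral at a cutoff $T_n \sim \log n$. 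The bulk contribution converges to $g(x)\int_0^\infty u e^{-u}\,du = g(x)$ uniformly on compacta in $\Omega$, while the tail is dominated by $2n\|f\|\,e^{-T_n}$ and tends to zero. Hence $Lf_n \to g$ uniformly on compacta in $\Omega$.

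For the backward inclusion (RHS $\subseteq \D(L_\Omega)$), take $f \in C_0(\Omega)$ with $(f_n) \subset \D(L)$ and $g$ as hypothesized. By Theorem \ref{pwL} it suffices to verify $L^\sharp f(x) = g(x)$ for each $x \in \Omega$. I would apply Dynkin's formula to $f_n$ at the stopping time $t \wedge \tau_\Omega$,
\[
\Ex\bigl[f_n(X_{t \wedge \tau_\Omega})\bigr] = f_n(x) + \Ex\int_0^{t \wedge \tau_\Omega} Lf_n(X_s)\,ds,
\]
decompose $\Ex[f_n(X_{t \wedge \tau_\Omega})] = P_t^\Omega f_n(x) + \Ex[f_n(X_{\tau_\Omega});\tau_\Omega \leq t]$, and pass $n \to \infty$ with $x \in \Omega$. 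The boundary term vanishes because $f_n \to f$ in $C_0(\R^d)$ with $f \equiv 0$ off $\Omega$, forcing $f_n(X_{\tau_\Omega}) \to 0$ uniformly over sample paths. For the integral, localize via a ball $B(x,r) \subset \Omega$: split the time interval at $\tau_r^x \wedge t$, handle the bulk by the uniform convergence of $Lf_n \to g$ on $\bar B(x,r)$, and control the remainder using the Feller exit-time estimate $\Px(\tau_r^x < s) \leq Ms$ from Theorem \ref{pwL}. This identifies $P_t^\Omega f(x) - f(x) = \Ex \int_0^{t \wedge \tau_\Omega} g(X_s)\,ds$; dividing by $t$ and letting $t \to 0$, Lebesgue differentiation gives $(P_t^\Omega f(x) - f(x))/t \to g(x)$, which via \eqref{mmmUC} transfers to $L^\sharp f(x) = g(x)$. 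Theorem \ref{pwL} then yields $f \in \D(L_\Omega)$ with $L_\Omega f = g$.

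The main obstacle is the passage to the limit $n \to \infty$ inside $\Ex \int_0^{t \wedge \tau_\Omega} Lf_n(X_s)\,ds$: only uniform convergence on compacta in $\Omega$ is assumed, so $\|Lf_n\|_{C_0(\R^d)}$ need not be bounded, and contributions from a neighborhood of $\partial \Omega$ must be absorbed. The Feller exit-time estimate supplies an extra factor of $t$ in the sliver $\{\tau_r^x < s \leq t \wedge \tau_\Omega\}$ that is expected to balance any polynomial growth of $\|Lf_n\|$, and a careful diagonal argument in $(n, r, t)$ is the technical heart of the proof.
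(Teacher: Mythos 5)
Your forward inclusion is essentially the paper's own argument: the Yosida approximants $f_\lambda=\lambda(\lambda-L)^{-1}f$, the Laplace-transform representation of the resolvent, and the uniform-on-compacta convergence of the difference quotients supplied by Theorem \ref{pwL}. That half is correct. The backward inclusion, however, has a genuine gap at exactly the point you flag, and the proposed repair does not close it. In the stopped Dynkin identity $\Ex[f_n(X_{t\wedge\tau_\Omega})]=f_n(x)+\Ex\int_0^{t\wedge\tau_\Omega}Lf_n(X_s)\,ds$, the ``sliver'' event $\{\tau_r^x\le s<t\wedge\tau_\Omega\}$ places $X_s$ in $\Omega\setminus B(x,r)$, a set reaching all the way to $\partial\Omega$, where the hypotheses give \emph{no} control on $Lf_n$: the only available bound is $\|Lf_n\|\cdot t\,{\mathbb P}^x[\tau_r^x<t]\le \|Lf_n\|\,Mt^2$, and $\|Lf_n\|$ may grow arbitrarily fast in $n$ --- there is no polynomial (or any) rate to balance against. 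Tracing your own estimates, for fixed $\varepsilon$ and $n$ the inequality $|(P_t^\Omega f(x)-f(x))/t-g(x)|<\varepsilon$ is obtained only for $t$ in a window of the form $c_1\|f-f_n\|/\varepsilon<t<c_2\,\varepsilon/(M\|Lf_n\|)$, the lower bound coming from trading $f$ for $f_n$ and the upper bound from the sliver. Nothing in the hypotheses prevents $\|f-f_n\|\cdot\|Lf_n\|\to\infty$, in which case every window is empty and no diagonal choice of $(n,r,t)$ exists; and since $L^\sharp f(x)$ requires the limit along \emph{all} $t\to0$, you cannot restrict to a sequence of $t$'s tied to $n$ anyway.

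The probabilistic route can be salvaged, but only by reordering the limits: apply optional stopping at $t\wedge\tau_{\Omega_k}$ for an exhaustion $\Omega_k\subset\subset\Omega$, let $n\to\infty$ first with $k$ fixed (the integrand then lives on the compactum $\bar\Omega_k$, where $Lf_n\to g$ uniformly), and only afterwards let $k\to\infty$, using $\tau_{\Omega_k}\uparrow\tau_\Omega$ (quasi-left-continuity) and the boundedness of $g$ to pass to $P_t^\Omega f(x)-f(x)=\Ex\int_0^{t\wedge\tau_\Omega}g(X_s)\,ds$ for every fixed $t$; your Lebesgue-differentiation step then finishes. The paper sidesteps the interchange entirely: it puts $h=(I-L_\Omega)^{-1}(f-g)$, approximates $h$ via the already-established forward inclusion, and applies the positive maximum principle to $u_n=h_n-f_n$ at its maximizer --- a point that stays in a fixed compactum for large $n$ because $u_n\to u=h-f$ uniformly and $u\in C_0(\Omega)$ attains its maximum in the interior --- so that only compactum information about $Lu_n$ is ever used. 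You should either adopt one of these two devices or add a hypothesis controlling $\|Lf_n\|$ globally, which would weaken the theorem.
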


\begin{proof}
First we show that the limit $g$ in \eqref{DL_E} is unique for any given $f$. Assume that for some $f_n\in \D(L)$ we have $f_n\to 0$ uniformly on $\R^d$ and $Lf_n(x)\to g(x)$ for all $x\in \Omega $, \TL{uniformly on compacta in $\Omega$}. We claim that $g(x)=0$ for all $x\in \Omega $. Assume $g(x)> \delta$ for all $x\in B(x_0;r)\subset \Omega $ for some $x_0\in \Omega $ and $\delta,r>0$.  Choose $h\in C_c^\infty$ such that $h(x_0)>0$ is the only local maximum. Let $\epsilon>0 $ be small enough that $\TL{U}=\{x:h(x_0)-h(x)<\epsilon\}\subset B(x_0,r)$ and let $y=\sup_{x\in \Omega}|Lh(x)|$. Consider
$$h_n=h+4\frac{y}{\delta}f_n.$$ Let $n$ be large enough such that $|4\frac{y}{\delta} f_n(x)|<\epsilon/2$ \MCUBED{for all $x\in \Omega$}
and $Lf_n(x)>\delta/2$ for all $x\in \TL{U}$.
\MCUBED{Then
\[4\frac{y}{\delta} Lf_n(x)>4\frac{y}{\delta} \frac\delta{2}=2y\quad\text{for all $x\in \Omega$,}\]
and since $Lh(x)\leq y$ for all $x\in \Omega$, it follows that $Lh_n(x)>y$ for all $x\in \Omega$.  For all $x\notin U$ we have $h(x)\leq h(x_0)-\varepsilon$, and hence $h_n(x)\leq h(x_0)-\varepsilon/2$ for all $x\notin U$.  Since $h_n(x_0)>h(x_0)-\varepsilon/2$, it follows that $h_n$ attains its maximum at some point $x_n\in U$.} \MCUBED{Then the positive maximum principle \cite[Theorem 17.11 (iii)]{KAL} implies that $Lh_n(x_n)\leq 0$, and this contradicts the fact that  $Lh_n(x)>0$ for all $x\in \Omega$.}  Hence $g\le 0$. Considering the sequence $-f_n$, we obtain that $-g\le0$ and hence $g=0$.  Given two sequences $f_n$ and $f_n'$ in $\D(L)$ that both converge to $f$ in $C_0(\R^d)$, and such that $Lf_n\to g$ and $Lf_n'\to g'$ in $C_0(\R^d)$, it follows that $f_n-f_n'\to 0$ in $C_0(\R^d)$, and hence $L(f_n-f_n')\to g-g'=0$, which proves uniqueness.

Next we show that functions $f\in \D(L_\Omega )$ can be approximated locally in the graph norm by functions in the domain of $L$, namely by the functions
$$f_\lambda= (\lambda-L)^{-1}\lambda f.$$ As $P_t f$ is continuous in $t$ and $\|P_tf\|\leq\|f\|$, it is not hard to check that $f_\lambda=\lambda \int_0^\infty e^{-\lambda t} P_tf\,dt$ and
$$\lim_{\lambda\to\infty}f_\lambda=P_0f=f$$ \TL{in $C_0(\R^d)$}. Furthermore,
$f_\lambda\in \D(L)$ and by definition,
$$Lf_\lambda=\lambda f_\lambda-\lambda f.$$
Theorem \ref{pwL} implies that $\frac{P_tf(x)-f(x)}{t}\to L_\Omega f(x)$ uniformly in $x\in U\subset\subset \Omega $, and then it is not hard to check that, \MMM{using a substitution $u=\lambda t$},
\begin{equation}\begin{split}\lim_{\lambda\to\infty}Lf_\lambda(x)=&\lim_{\lambda\to\infty}\lambda^2\int_0^\infty e^{-\lambda t}P_tf(x)\,dt-\lambda f(x)\\
=&\lim_{\lambda\to\infty}\lambda^2\int_0^\infty e^{-\lambda t}(P_tf(x)-f(x))\,dt\\
=&\lim_{\lambda\to\infty}\lambda^2\int_0^\infty te^{-\lambda t}\frac{P_tf(x)-f(x)}{t}\,dt\\
=&\MMM{\lim_{\lambda\to\infty}\int_0^\infty ue^{-u}\frac{P_{(u/\lambda)}f(x)-f(x)}{(u/\lambda)}\,du}\\
=&L_\Omega f(x)
 \end{split}\end{equation}
uniformly in $x\in U$. Hence $\D(L_\Omega )$ is contained in the set on the right-hand side of \eqref{DL_E}.

To prove the reverse set inclusion, suppose that \TL{$f\in C_0(\Omega )$ and} for some $f_n\in \D(L)$ we have $f_n\to f$ in \TL{$C_0(\R^d)$} and $Lf_n(x)\to g(x)$ uniformly in $x\in U\subset\subset \Omega $ for some $g\in C_0(\Omega )$. Let
$h=(I-L_\Omega )^{-1}(f-g)$ so that $$h-f=L_\Omega h-g.$$
Since the resolvent maps $C_0(\Omega )$ onto $\D(L_\Omega )$, the function $h$ lies in the set on the right-hand side of \eqref{DL_E} by what we have already proven. Hence there exist $h_n\in \D(L)$ such that $h_n\to h$ in $C_0(\R^d)$ and $Lh_n(x)\to L_\Omega  h(x)$ for all $x\in \Omega $, uniformly on compacta. Let $u=h-f$ and assume (without loss of generality) that $u(x_0)=\|u\|>\epsilon$ for some $\epsilon>0$. Let $u_n=h_n-f_n$ so that $Lu_n(x)\to L_\Omega h(x)-g(x)=u(x)$ uniformly in $x\in U\subset\subset\Omega$. However, as $u_n$ converges uniformly to $u$ there exists $N>0$ and $U\subset\subset \Omega $ such that $\{x_n:u_n(x_n)=\TL{\|u_n\|}\}\subset U$ for all $n>N$. As $u_n(x_n)>\epsilon/2$ for large $n$ and $Lu_n(x_n)\le 0$ by the maximum principle \MCUBED{\cite[Theorem 17.11 (iii)]{KAL}},
$u_n(x)-Lu_n(x)$ cannot converge uniformly on $U$ to 0. This is a contradiction, and hence $u\equiv 0$.  Then $h=f\in \D(L_\Omega )$, which completes the proof.
\end{proof}

\FE{Even if $f\not\in C_0^2(\R^d)$, the pointwise limit \eqref{GenDefPtwise23} might still exist for some $x\in\R^d$}. \fe{The next result shows} that \FE{we still have $L^\sharp f(x)=L^p f(x)$} for functions that are locally twice differentiable.

\begin{lemma}\label{ptwise}Assume that $X_t$ is a Feller process on $\R^d$ with $C_c^\infty(\R^d)$ contained in the domain of its generator.
  Let $f\in C_0(\R^d)$ with $f$ twice continuously differentiable in a neighborhood $\Omega$ of $x$. Then
  \FE{$L^\sharp f(x)=L^p f(x)$ where $L^p$} is given by \eqref{PDO}.
\end{lemma}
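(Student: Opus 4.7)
The plan is to localize $f$ around $x$, split it into a compactly supported $C^2$ piece handled by the pseudodifferential formula already established for $C^2_0(\R^d)$, and a remainder that vanishes on a neighborhood of $x$, which I handle by uniform approximation by $C_c^\infty$ functions.

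\textbf{Step 1 (Localization).} Choose $\phi\in C_c^\infty(\R^d)$ with $\phi\equiv 1$ on some ball $B(x,r)\subset\Omega$ and $\mathrm{supp}\,\phi\subset\Omega$. Write $f=f_1+f_2$ with $f_1:=\phi f$ and $f_2:=(1-\phi)f$. Because $f$ is $C^2$ on $\Omega\supset\mathrm{supp}\,\phi$ and $\phi\in C_c^\infty$, the product $f_1$ is compactly supported and $C^2$, hence $f_1\in C_0^2(\R^d)$. The remainder $f_2\in C_0(\R^d)$ vanishes on $B(x,r)$.

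\textbf{Step 2 (The $f_1$ piece).} Since $C_c^\infty(\R^d)\subset\D(L)$ by hypothesis and $f_1\in C_0^2(\R^d)$, the result cited in the paragraph preceding Remark \ref{RemBSWcore} (namely \cite[Theorem 2.37]{BSW}) yields immediately $L^\sharp f_1(x)=L^p f_1(x)$.

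\textbf{Step 3 (The $f_2$ piece).} Because $f_2$ vanishes identically on $B(x,r)$, its value and all its derivatives vanish at $x$, so \eqref{PDO} reduces to
\[
L^p f_2(x)=\int_{|y|\geq r} f_2(x+y)\,N(x,dy),
\]
a finite integral since $f_2$ is bounded and $N(x,\{|y|\geq r\})<\infty$. Next, mollify $f_2$ and multiply by a cut-off to obtain $\eta_n\in C_c^\infty(\R^d)$ with $\eta_n\equiv 0$ on $B(x,r/2)$ and $\|\eta_n-f_2\|_\infty\to 0$. By Step 2 applied pointwise to each $\eta_n$ (note $\eta_n(x)=0$ and $\nabla\eta_n(x)=0$),
\[
L\eta_n(x)=L^p\eta_n(x)=\int_{|y|\geq r/2}\eta_n(x+y)\,N(x,dy),
\]
while from the definition of the generator, $L\eta_n(x)=\lim_{t\searrow 0}\Ex[\eta_n(X_t)]/t$. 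The same argument that yields \eqref{TLstar3} provides a constant $M$ with $\px(|X_t-x|\geq r/2)\leq Mt$ for small $t$; since $f_2-\eta_n$ is supported in $\{|z-x|\geq r/2\}$ (for large $n$), this gives
\[
\left|\frac{\Ex[f_2(X_t)-\eta_n(X_t)]}{t}\right|\leq M\,\|f_2-\eta_n\|_\infty .
\]
Combining this bound with dominated convergence $\int \eta_n(x+y)N(x,dy)\to\int f_2(x+y)N(x,dy)$ on $\{|y|\geq r/2\}$, a standard three-$\epsilon$ argument exchanges the limits in $n$ and $t$ to conclude
\[
L^\sharp f_2(x)=\lim_{t\searrow 0}\frac{\Ex[f_2(X_t)]}{t}=L^p f_2(x).
\]

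\textbf{Step 4 (Conclusion).} By linearity of both $L^\sharp$ and $L^p$ at $x$, $L^\sharp f(x)=L^\sharp f_1(x)+L^\sharp f_2(x)=L^p f_1(x)+L^p f_2(x)=L^p f(x)$. The technical crux is the interchange of limits in Step 3, which rests on the small-time exit bound $\px(|X_t-x|\geq r/2)=O(t)$ already used in the proof of Theorem \ref{pwL}; everything else is linearity, localization, and the known formula for $C_0^2$-functions.
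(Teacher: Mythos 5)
Your proof is correct and follows essentially the same route as the paper's: both reduce to the known identity $L^\sharp=L^p$ on $C_0^2(\R^d)$ by approximating $f$ uniformly with functions that agree with $f$ (equivalently, whose error vanishes) on a ball around $x$, and both control the error using the small-time exit bound $\px(\tau_r^x<t)\le C_r t$ together with the finiteness of $N(x,\cdot)$ away from the origin. The paper merely packages your $f_1+\eta_n$ into a single approximating sequence $f_n\in C_0^2(\R^d)$ with $f_n=f$ on $B(x,r)$ and runs the two error estimates in one display.
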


\begin{proof}
Let $r$ be such that $B(x,2r)\subset \Omega$ and pick $f_n\in C_0^2(\R^d)$ with the property that $f_n\to f$ uniformly and $f_n(y)=f(y)$ for all $y\in B(x,r)$.  Then
  \begin{equation}
    \begin{split}
      \left|L^\sharp f(x)-\FE{L^p f}(x)\right|=&\left|L^\sharp f(x)-Lf_n(x)+Lf_n(x)-\FE{L^p f}(x)\right|\\
      =&\left|\lim_{t\searrow 0}\frac{\Ex[f(X_t)-f_n(X_t)]}{t}\right.\\&+\left.\int_{\R^d\setminus\{0\}} \left(f(x+y)-f_n(x+y)\right)\,N(x,dy)\right|\\
      \le&\lim_{t\searrow 0}\frac{\mathbb P^x\{\tau_r^x<t\}}{t}\|f-f_n\|\\
      &+\left|\int_{|y|>r} \left(f(x+y)-f_n(x+y)\right)\,N(x,dy)\right|\\
      \le& M_r\|f-f_n\|\to 0,
    \end{split}
  \end{equation}where $M_r=C_r+N_r$ with $C_r$ as in \cite[Theorem 5.1]{BSW} given by  $$\mathbb P^x\{\tau_r^x<t\}\le t C_r$$ and $N_r=C/r^2$ with $C$ given as in  \cite[Theorem 2.31b]{BSW} by $$\int_{\R^d\setminus\{0\}}\min(|y|^2,1)N(x,dy)<C.$$
  This concludes the proof.
\end{proof}

The following theorem is the main result of this section.  It shows that we can evaluate the generator $L_\Omega f(x)$ of the killed Markov process pointwise for $x\in \Omega $ using the explicit formula \eqref{PDO} for $Lf(x)$.  Let $C^2_0(\Omega )$ denote the set of $C_0(\Omega )$ functions with first and second order partial derivatives that are continuous at every  $x\in \Omega $.  Observe that $f\in C_0^2(\Omega )$ {\it does not} require the partial derivatives to remain bounded as $x\in \Omega $ approaches the boundary of the domain.

\begin{theorem}\label{masterCor}
Assume that $X_t$ is a doubly Feller process on $\R^d$, and let $\Omega\subset \R^d$ be a regular bounded domain.  \FE{Suppose that $C_0^2(\R^d)$ is a core for $L$, so that $Lf(x)=L^\sharp f(x)=L^p f(x)$ for every $x\in\R^d$ and $f\in  C^2_0(\R^d)$}. Then:
 \begin{enumerate}
 \item for every $f\in \D(L_\Omega)$ there exists $f_n\in C^2_0(\Omega)$ such that $f_n\to f$ uniformly and $\FE{L^p} f_n$ converges uniformly on compact subsets of $\Omega$ to $L_\Omega f$;
 \item if $f_n\in C^2_0(\Omega)$ is such that $f_n\to f\in C_0(\Omega)$ uniformly and $\FE{L^p}f_n\to g\in C_0(\Omega)$ converges uniformly on compact subsets of $\Omega$, then $f\in \D(L_\Omega)$ and $L_\Omega f=g$.
 \end{enumerate}
In particular,
if $f\in C^2_0(\Omega )$ and $\FE{L^p} f(x)\in C_0(\Omega )$,
then $f\in \D(L_\Omega )$ and $L_\Omega f(x)=\FE{L^p} f(x) $ is given by \eqref{PDO} for every $x\in\Omega$.
\end{theorem}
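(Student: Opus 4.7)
The plan is to reduce both (1) and (2) to Theorem~\ref{master} by using the core hypothesis on $C_0^2(\R^d)$ to promote sequences in $\D(L)$ to $C^2$ sequences, together with smooth cutoffs to pass between functions on $\R^d$ and functions supported inside $\Omega$. The ``in particular'' statement then falls out of (2) applied to the constant sequence $f_n\equiv f$. Throughout I fix cutoffs $\chi_n\in C_c^\infty(\Omega)$, $0\le\chi_n\le 1$, with $\chi_n\equiv 1$ on $\Omega_n:=\{x\in\Omega:\mathrm{dist}(x,\partial\Omega)>1/n\}$.

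For (2), given $f_n\in C_0^2(\Omega)$ with $f_n\to f$ uniformly and $L^pf_n\to g$ uniformly on compacta, set $\tilde f_n:=\chi_n f_n$. Extended by zero, $\tilde f_n\in C_c^2(\R^d)\subset C_0^2(\R^d)\subset\D(L)$, so the core hypothesis gives $L\tilde f_n=L^p\tilde f_n$ on $\R^d$. First I would check $\tilde f_n\to f$ in $C_0(\R^d)$: since $\tilde f_n-f_n=(\chi_n-1)f_n$ is supported in $\{\mathrm{dist}(\cdot,\partial\Omega)\le 1/n\}$, one has $\|\tilde f_n-f_n\|_\infty\le\sup_{\mathrm{dist}(z,\partial\Omega)\le 1/n}|f_n(z)|\to 0$, combining the uniform convergence $f_n\to f$ with $f\in C_0(\Omega)$. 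Next I would show $L\tilde f_n\to g$ uniformly on each compact $U\subset\subset\Omega$. Let $r_0:=\mathrm{dist}(U,\partial\Omega)$; for $n$ large enough that $\chi_n\equiv 1$ on the $r_0/2$-neighborhood of $U$, the differential terms of $L^p\tilde f_n(x)$ and $L^pf_n(x)$ coincide for $x\in U$, and the nonlocal difference collapses to $\int_{|y|\ge r_0/2}(\tilde f_n-f_n)(x+y)\,N(x,dy)$, because the integrand vanishes whenever $|y|<r_0/2$. Bounding this by $\|\tilde f_n-f_n\|_\infty\cdot\sup_{x\in U}N(x,\{|y|\ge r_0/2\})$ and invoking local boundedness of the L\'evy tail (a consequence of the Feller structure via \cite[Theorem~5.1]{BSW}, as already used in Lemma~\ref{ptwise}) yields $L\tilde f_n\to g$ uniformly on compacta. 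Theorem~\ref{master} then delivers $f\in\D(L_\Omega)$ with $L_\Omega f=g$.

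For (1), Theorem~\ref{master} furnishes $\phi_n\in\D(L)$ with $\phi_n\to f$ in $C_0(\R^d)$ and $L\phi_n\to L_\Omega f$ uniformly on compacta in $\Omega$. Since $C_0^2(\R^d)$ is a core, each $\phi_n$ can be approximated in the graph norm by elements of $C_0^2(\R^d)$; a diagonal extraction produces $\psi_n\in C_0^2(\R^d)$ with $\psi_n\to f$ in $C_0(\R^d)$ and $L^p\psi_n=L\psi_n\to L_\Omega f$ uniformly on compacta. Setting $f_n:=\chi_n\psi_n\in C_c^2(\Omega)\subset C_0^2(\Omega)$ and running exactly the cutoff estimates from paragraph two gives $f_n\to f$ uniformly and $L^pf_n\to L_\Omega f$ uniformly on compacta in $\Omega$, completing (1).

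The main obstacle in both parts is the nonlocal estimate for the multiplicative cutoff. The differential part of $L^p\tilde f_n-L^pf_n$ trivializes once $\chi_n\equiv 1$ on a neighborhood of $U$, but the L\'evy integral against $N(x,dy)$ couples all scales, so quantitative control requires simultaneously (i) that $(1-\chi_n)f_n$ be supported in a thin boundary shell so that only $|y|\ge r_0/2$ contributes, (ii) that $\|\tilde f_n-f_n\|_\infty\to 0$, which couples the uniform convergence $f_n\to f$ with the boundary decay of $f$, and (iii) local boundedness of $N(x,\{|y|\ge r_0/2\})$ in $x\in U$.
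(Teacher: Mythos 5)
Your proposal is correct and follows essentially the same route as the paper's proof: both parts are reduced to Theorem~\ref{master} by using the core hypothesis (with a diagonal/graph-norm extraction) to replace $\D(L)$ approximants by $C_0^2(\R^d)$ ones, multiplying by smooth cutoffs that equal $1$ on an exhausting sequence of compactly contained subdomains, and controlling the resulting error in $L^p$ by observing that only the tail $\int_{|y|\ge\epsilon}N(x,dy)$ contributes, which is locally bounded. The only cosmetic difference is the citation for that tail bound (the paper uses \cite[Proposition 2.27(d)]{BSW}); the substance of the estimate is identical.
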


\begin{proof} Consider a sequence of open sets $\Omega _n\subset\subset \Omega _{n+1}$ for $n\geq 1$ with $\bigcup \Omega _n=\Omega $. Take $\psi_n\in C^\infty_c(\rd)$ with $I_{\Omega _{n}}\le\psi_n\le I_{\Omega _{n+1}}$.

To prove (1), by Theorem \ref{master} and the definition of a core there exists $f_n^\infty\in C_0^2(\R^d)$ such that $f_n^\infty\to f$ uniformly and $\FE{Lf_n^\infty(x)=L^p} f_n^\infty(x)\to L_\Omega f(x)$ uniformly on compact subsets of $\Omega$. \fe{To see this, suppose that $f\in \D(L_\Omega)$, and extend $f$ to an element of $C_0(\R^d)$ by setting $f(x)=0$ for $x\notin\Omega$.  Apply Theorem \ref{master} to obtain a sequence $(f_n)\subset \D(L)$ such that $f_n\to f$ in $C_0(\R^d)$, and $Lf_n(x)\to L_\Omega f(x)$ uniformly on compacta in $\Omega$.  Then for any compact set $U\subset\Omega$ and any integer $k>0$, for some $n_0$, we have $\|f_n-f\|<1/k$ for all $n\geq n_0$, and $|Lf_n(x)-L_\Omega f(x)|<1/k$ for all $x\in U$ and all $n\geq n_0$.  Since $C_0^2(\R^d)$ is a core, for each $f_n$ there exists a sequence $f_{nm}^\infty\in C_0^2(\R^d)$ such that $\|f_n-f_{nm}^\infty\|+\|Lf_n-Lf_{nm}^\infty\|\to 0$ as $m\to\infty$.  Hence for any $n>0$ there is an $m_0$ such that $\|f_n-f_{nm}^\infty\|+\|Lf_n-Lf_{nm}^\infty\|<1/n$ for all $m\geq m_0$.  Define $f_{n}^\infty=f_{nm_0}^\infty$.  Then for $n\geq n_1:=\max(n_0,k)$ we have for all $n\geq n_1$ that, by the triangle inequality, $\|f-f_{n}^\infty\|<2/k$ for all $n\geq n_1$ and $|L_\Omega f(x)-\FE{L^p} f_{n}^\infty(x)|<2/k$ for all $x\in U$ and all $n\geq n_1$.   }  Let $f_n=\psi_nf_n^\infty$. Then $f_n\in C_0^2(\Omega)$ and by Lemma \ref{ptwise}
$$\left|\FE{L^p} f_n(x)-\FE{L^p} f_n^\infty(x)\right|=\left|\int_{x+y\not\in\Omega_n} \left(f_n(x+y)-f_n^\infty(x+y)\right)\,N(x,dy)\right|.$$
Let $U$ be a compact subset of $\Omega$. Then there exists $n_0$ such that $U\subset \Omega_{n_0}$ and since the closure of $\Omega_{n_0}$ is compact, \fe{it is not hard to check that for any $n>n_0$, there exists some $\epsilon>0$} such that $z\not\in\Omega_n$ implies that $|z-x|>\epsilon$ for all $x\in\fe{\Omega_{n_0}}$.

\fe{To see this, write $B(x,r)=\{w:|w-x|<r\}$ and note that, since $\bar\Omega_{n_0}\subset\Omega_n$ open, for each $x\in \bar\Omega_{n_0}$ there exists some $r>0$ such that $B(x,2r)\subset\Omega_n$.  The collection of sets $\{B(x,r):x\in\bar\Omega_{n_0}\}$ covers the compact set $\bar\Omega_{n_0}$, hence there exists a finite subcover $B(x_j,r_j)$ for $j=1,\ldots,J$ such that $\bar\Omega_{n_0}\subset\bigcup_{j=1}^J B(x_j,r_j)$. For any $x\in\bar\Omega_{n_0}$ we have $|x-x_j|<r_j$ for some $j=1,\ldots,J$ and $|x_j-z|>2r_j$ for all $z\notin\Omega_{n}$, so that $|x-z|\geq |x_j-z|-|x-x_j|>r_j$.  Then the claim holds with $\varepsilon=\min\{r_j:1\leq j\leq J\}$.}

By \cite[\fe{Proposition 2.27 (d)}]{BSW},
$$\int_{x+y\not\in\Omega_n} \left|f_n(x+y)-f_n^\infty(x+y)\right|\,N(x,dy)\le \|f_n-f_n^\infty\|\int_{|y|>\epsilon}\,N(x,dy)\to 0$$
 uniformly on $U$.  \fe{} and hence $\FE{L^p} f_n$ converges uniformly on $U$ to $L_\Omega f$.

To prove (2), let $f_n^\infty=\psi_n f_n$. Then $f_n^\infty\in C_0^2(\R^d)$ and, with the same argument as above, $\FE{L^p} f_n^\infty\to g$ uniformly on compact subsets of $\Omega$. By Theorem \ref{master}, $f\in \D(L_\Omega)$ and $L_\Omega f=g$.
\end{proof}

\begin{remark}\label{extension}
\fe{In general, we do not know whether $L_\Omega f(x)$ can be computed by the pointwise formula \eqref{PDO} for every $f\in\D(L_\Omega)$.  However, Theorem \ref{masterCor} shows that we can always write $L_\Omega f(x)=\lim_{n\to\infty} \FE{L^p} f_n(x)$ for some $f_n\in C^2_0(\Omega)$, so that the pointwise formula \eqref{PDO} applies to $\FE{L^p} f_n(x)$.  Hence $L_\Omega$ is the unique continuous extension to $\D(L_\Omega)$ of the formula \eqref{PDO} on $C^2_0(\Omega)$, \FE{compare \cite[Theorem 2.37 (a)]{BSW}}.  This is similar to the manner in which the Fourier transform is defined as an isometry on $L_2(\R^d)$:  The pointwise definition is valid on a dense subset $L_1(\R^d)\cap L_2(\R^d)$, and the isometry is the unique continuous extension to $L_2(\R^d)$.}
\end{remark}

\begin{remark}\label{RemHawkes}
In the case $c\equiv 0$, $l(x)\equiv l$,  $Q(x)\equiv Q$, and $N(x,dy)\equiv N(dy)$, \eqref{PDO} is the generator of a L\'evy process on $\R^d$.  Then Hawkes \cite[Lemma 2.1]{Hawkes} shows that $X_t$ is doubly Feller if and only if $X_t$ has a Lebesgue density for each $t>0$. It follows from Sato \cite[Theorem 31.5]{SAT} that $C_c^\infty(\R^d)\subset C^2_0(\R^d)\subset \D(L)$ in this case.  Hence the conditions of Theorem \ref{masterCor} are satisfied for any L\'evy process with a density.
\end{remark}

\section{Fractional Cauchy problems}\label{sec3}
In this section, we recall some results on (fractional) Cauchy problems that will be useful in Section \ref{SecApp}.
If $X_t$ is a doubly Feller process on $\Rd$ and $\Omega$ is a regular bounded domain, then Lemma \ref{lem:FellerProp} implies that the semigroup $P_t^\Omega$ associated with the killed process, defined by \eqref{PtDdef},  is a Feller semigroup on $C_0(\Omega)$.  The generator $L_\Omega$ of this semigroup and its domain $\D(L_\Omega)$ are given in Theorem \ref{pwL}. \fe{If $C_c^\infty(\R^d)\subset \D(L)$, then Theorem \ref{masterCor} gives an explicit pointwise formula \eqref{PDO} for $L_\Omega$, valid for all $x\in\Omega$ and all $f\in  C^2_0(\Omega)$.  Remark \ref{extension} explains that $L_\Omega$ is the unique continuous extension of \eqref{PDO} to $\D(L_\Omega)$.} Since $P_t^\Omega$ is a Feller semigroup, the function $u(t)=P_t^\Omega f$ solves the abstract Cauchy problem
 \begin{equation}\begin{split}\label{CpGenOmega}
\partial_t u(x,t)&= L_\Omega u(x,t)\quad u(x,0)=f(x)
\end{split}\end{equation}
for any $f\in\D(L_\Omega)$, e.g., see \cite[Proposition 3.1.9 (h)]{ABHN2}.  Furthermore $P_t^\Omega f$ is a {\it mild solution} to the Cauchy problem \eqref{CpGenOmega} for {\it any} $f\in C_0(\Omega)$ \cite[Proposition 3.1.9 (b)]{ABHN2}.  That is, $u(x,t)=P_t^\Omega f(x)$ is the unique solution in $C_0(\Omega)$ to the corresponding integral equation
 \begin{equation}\begin{split}\label{CpGenOmegaMild}
u(t)&= f+L_\Omega \int_0^t u(s)\,ds
\end{split}\end{equation}
for all $t\geq 0$.

The function
 \begin{equation}\label{CpGenOmegaInhom}
 u(x,t)=P_{t}^\Omega f(x)+\int_0^t P_{s}^\Omega g(x,t-s)\,ds
 \end{equation}
is the unique solution to the inhomogeneous Cauchy problem
\begin{equation}\begin{split}\label{CPinhom}
\partial_t u(x,t)&=L_\Omega u(x,t) +g(x,t);\quad u(x,0)=f(x)
\end{split}\end{equation}
for any $g(x,t)=g_0(x)+\int_0^t \partial_s g(x,s)\,ds\in C_0(\Omega)$ such that $\partial_t g(x,t)\in L^1_{\rm loc}(\RR^{+},C_0(\Omega))$ \cite[Corollary 3.1.17]{ABHN2}.  The same formula \eqref{CpGenOmegaInhom} gives the unique mild solution to \eqref{CPinhom} for any $f\in C_0(\Omega)$ and any $g\in L^1([0,T),C_0(\Omega))$, see \cite[Theorem 3.1.16]{ABHN2}.  That is, it solves the integral equation
 \begin{equation}\begin{split}\label{CpGenOmegaMildInhom}
u(t)&= f+L_\Omega \int_0^t u(s)\,ds+\int_0^t g(s)\,ds .
\end{split}\end{equation}
In practice, the condition $f\in\D(L_\Omega)$ can be hard to check.  In numerical analysis theory, it is therefore common to prove results like the Lax Equivalence Theorem for mild solutions, which can then be approximated by strong solutions, see for example \cite[Chapter 10]{IK}.

The positive and negative Riemann-Liouville fractional integrals of a suitable function $f:\R \to \R$ are defined by
\begin{equation}\begin{split}\label{RLint}
{\mathbb I}_{[L,x]}^\alpha f(x)&=\frac {1}{\Gamma(\alpha)} \int_L^x f(y) (x-y)^{\alpha-1}\,dy\\
{\mathbb I}_{[x,R]}^\alpha f(x)&=\frac {1}{\Gamma(\alpha)} \int_x^R f(y) (y-x)^{\alpha-1}\,dy
\end{split}\end{equation}
for any $\alpha>0$ and any $-\infty\leq L<x<R\leq\infty$, see for example \cite[Definition 2.1]{Samko}.
The positive and negative Riemann-Liouville fractional derivatives are defined by
\begin{equation}\begin{split}\label{RLdef}
{\mathbb D}^\alpha_{[L,x]} f(x)&=\left(\frac{d}{dx}\right)^n {\mathbb I}_{[L,x]}^{n-\alpha} f(x)=\frac{1}{\Gamma(n-\alpha)}\frac{d^n}{dx^n}\int_L^x f(y)(x-y)^{n-\alpha-1}dy\\
{\mathbb D}^\alpha_{[x,R]} f(x)&=\left(-\frac{d}{dx}\right)^n {\mathbb I}_{[x,R]}^{n-\alpha} f(x)=\frac{(-1)^n}{\Gamma(n-\alpha)}\frac{d^n}{dx^n}\int_x^R f(y)(y-x)^{n-\alpha-1}dy
\end{split}\end{equation}
for any non-integer $\alpha>0$ and any $-\infty\leq L<x<R\leq\infty$, where $n-1<\alpha<n$, see for example \cite[p.\ 31]{Samko}.  The positive and negative Caputo fractional derivatives are defined by
\begin{equation}\begin{split}\label{CaputoDef}
{\partial}^\alpha_{[L,x]} f(x)&={\mathbb I}_{[L,x]}^{n-\alpha} \left(\frac{d}{dx}\right)^n f(x)=\frac{1}{\Gamma(n-\alpha)}\int_L^x f^{(n)}(y)(x-y)^{n-\alpha-1}dy\\
{\partial}^\alpha_{[x,R]} f(x)&={\mathbb I}_{[x,R]}^{n-\alpha} \left(-\frac{d}{dx}\right)^n f(x)=\frac{(-1)^n}{\Gamma(n-\alpha)}\int_x^R f^{(n)}(y)(y-x)^{n-\alpha-1}dy ,
\end{split}\end{equation}
see for example \cite[Eq.\ (2.16)]{FCbook}.  If $0<\beta<1$, then for a function $f:\RR^{+}\to\RR$ with Laplace transform
\begin{equation}\label{LTdef}
\tilde f(s):=\int_0^\infty e^{-st}f(t)\,dt
\end{equation}
it is not hard to show that ${\partial}^\beta_{[0,t]} f(t)$ has Laplace transform $s^\beta \tilde f(s)-s^{\beta-1}f(0)$, extending the well-known formula for integer order derivatives.  Since ${\mathbb D}^\beta_{[0,t]} f(t)$  has Laplace transform $s^\beta \tilde f(s)$, and since $s^{\beta-1}$ is the Laplace transform of the function $t^{-\beta}/\Gamma(1-\beta)$, it follows by the uniqueness of the Laplace transform that
\begin{equation}\label{RLtoCaputo}
{\partial}^\beta_{[0,t]} f(t)={\mathbb D}^\beta_{[0,t]} f(t)-\frac{t^{-\beta}}{\Gamma(1-\beta)}f(0) ,
\end{equation}
see \cite[p.\ 39]{FCbook} for more details.

Let $g_\beta(u)$ denote the probability density function of the standard stable subordinator, with Laplace transform
\begin{equation}\label{subordLT}
\int_0^\infty e^{-su}g_\beta(u)du=e^{-s^\beta}
\end{equation}
for some $0<\beta<1$. Suppose that $D_t$ is a L\'evy process such that $g_\beta(u)$ is the probability density of $D_1$, and define the {\em inverse stable subordinator} (first passage time)
\begin{equation}\label{EtDef}
E_t=\inf\{u>0: D_u>t\} .
\end{equation}

A general result from the theory of semigroups \cite[Theorem 3.1]{fracCauchy} (see also Remark \ref{uniquenessRemark}) implies that the function
\begin{equation}\label{e:3.8}
v(x,t):=\int_0^\infty  g_\beta(r) P_{(t/r)^\beta}^\Omega f(x)\,dr
\end{equation}
\mmm{is the unique solution} to the time-fractional Cauchy problem
\begin{equation}\label{fCP}
{\mathbb D}^\beta_t v(x,t)=L_\Omega v(x,t)+\frac{t^{-\beta}}{\Gamma(1-\beta)}f(0);\quad v(x,0)=f(x)
\end{equation}
for any $f\in\D(L_\Omega)$.
Using \eqref{RLtoCaputo}, it follows that the same function also solves
\begin{equation}\begin{split}\label{CaputofCP}
\partial^\beta_t v &=L_\Omega v ;\quad v(0)=f
\end{split}\end{equation}
for any $f\in\D(L_\Omega)$.  Since
\begin{equation}\label{EtDens}
h(w,t)=\frac{t}{\beta}w^{-1-1/\beta}g_\beta(tw^{-1/\beta})
\end{equation}
is the probability density function of the inverse stable subordinator $E_t$ \cite[Corollary 3.1]{limitCTRW}, it follows by a simple change of variables that
\begin{equation}\label{FCPsoln2}
v(x,t)=\int_0^\infty h(w,t) P_{w}^\Omega f(x)\,dw =\int_0^\infty u(x,w)h(w,t)\,dw =\Ex [f(X^\Omega_{E_t})] .
\end{equation}

\begin{remark}\label{uniquenessRemark}
\mmm{The proof in \cite[Theorem 3.1]{fracCauchy} uses Laplace transforms, and although it is not explicitly stated, this also leads to a simple proof of uniqueness:  If $v(x,t)$ solves the fractional Cauchy problem \eqref{fCP}, then its Laplace transform
satisfies $\tilde v= (s^\beta-L)^{-1} s^{\beta-1} f$.   As $L$ generates a semigroup, $(s^\beta-L)^{-1}$ is a bounded operator for all $s^\beta$ in the right half plane. In particular $(s^\beta-L)^{-1} 0=0 $ and hence by the uniqueness of the Laplace transform, we have $v=0$ for initial data $f=0$. Then, given two solutions $v_1,v_2$ to \eqref{fCP}, their difference $v=v_1-v_2$ solves \eqref{fCP} with $f=0$, and hence $v_1=v_2$.  Therefore, \eqref{e:3.8} is the unique solution to the fractional Cauchy problem \eqref{fCP}.  The uniqueness of solutions is well know, and was used, for example, in \cite{IBM}.}
\end{remark}

Baeumer et al.\ \cite{forcing} consider the inhomogeneous fractional Cauchy problem
\begin{equation} \label{FCPforce1c}
\partial_t^\beta v(x,t)=L_\Omega v(x,t)+r(x,t);\;\;\; v(x,0)=f(x)
\end{equation}
with $0<\beta<1$.  Assuming that $t\mapsto v(x,t)$ is differentiable and $r(x,0)\equiv 0$, they show that \eqref{FCPforce1c} can also be written in Volterra integral form
\begin{equation} \label{FCPforceI}
v(x,t)=L_\Omega \,{\mathbb I}_{[0,t]}^\beta v(x,t)+f(x)+\int_0^t R(x,s)\,ds
\end{equation}
with $R(x,t)=\partial_t^{1-\beta} r(x,t)$ (and then
$R(x,t)={\mathbb D}_t^{1-\beta} r(x,t)$ as well).  Note that the forcing function $R(x,t)$ has the traditional meaning, and the units of $x/t$, unlike the function $r(x,t)$.  Any solution to the integral equation \eqref{FCPforceI} will be called a {\em mild solution} to the inhomogeneous fractional Cauchy problem \eqref{FCPforce1c}.  Then the inhomogeneous fractional Cauchy problem
\eqref{FCPforce1c}
with $r(x,0)\equiv 0$, and $R(t)\in L^1_{\rm loc}(\RR^{+};C_0(\Omega))$ has a unique mild solution
\begin{equation}\label{e:solD}
v(x,t)=\int_0^\infty P_s^\Omega f(x)h(s,t)\,ds+\int_0^t \int_0^\infty P_u^\Omega R(x,s) h(u,t-s)\,du\,ds,
\end{equation}
where $h$ is given by \eqref{EtDens}, see Baeumer et al.\ \cite[Theorem 1]{forcing}.

\section{Applications}\label{SecApp}
In many applications, including numerical analysis, it is necessary to consider fractional partial differential equations on a bounded domain with Dirichlet boundary conditions.  However, the theoretical foundations have been lacking.  Using the results of Section \ref{sec2} on the generator of the killed process, along with the results from Section \ref{sec3} on fractional Cauchy problems, we can establish existence and uniqueness of solutions to many fractional partial differential equations on a bounded domain with Dirichlet boundary conditions.  The main technical condition is that the underlying Markov process is doubly Feller (defined just before Lemma \ref{lem:FellerProp}).  In this section, we provide some example applications to illustrate the power of our method.

\begin{example}
\fe{This example clarifies that the solution to the Cauchy problem \eqref{CpGenOmega} on the bounded domain need not solve the corresponding Cauchy problem $\partial_t u(x,t)=L u(x,t)$ on $C_0(\R^d)$.}  Suppose that $f\geq 0$ is a smooth function with compact support in $\Omega=(0,M)\subset{\mathbb R}^1$, and that $L=\Delta=\partial_x^2$, the generator of a Brownian motion $X_t$ on ${\mathbb R}^1$.  The Cauchy problem
\begin{equation}\label{CPremarkEq}
\partial_t U(x,t)=\Delta U(x,t);\quad U(x,0)=f(x)
\end{equation}
has a unique solution
\[U(x,t)=\int_{y\in \Rd} f(y)p(x-y,t)dy\]
on $C_0({\mathbb R}^1)$, where
$p(x,t)=(4\pi t)^{-1/2}e^{-y^2/(4t)}$
is the Gaussian density with mean zero and variance $2t$.  Then $U(x,t)>0$ for all $t>0$ and all $x\in\Rd$, so $U(x,t)$ does not vanish off $\Omega$, and hence is not a solution to \eqref{CpGenOmega}.  In this case, the solution to \eqref{CpGenOmega} can be written explicitly in the form
\[u(x,t)=\sum_{n=1}^\infty f_n e^{-\lambda_n t}\psi_n(x)\]
where $\lambda_n=(n\pi/M)^2, n=1,2,3,\ldots$ are the eigenvalues and $\psi_n(x)=\sin(n\pi x/M)$ are the corresponding eigenfunctions of the generator $L_\Omega$ of the killed semigroup, and $f_n=(2/M)\int \psi_n(x)f(x)\,dx$, see for example \cite[Eq.\ (8) with $\alpha=1$]{agrawal}.  \fe{This solution belongs to $C^2_0(\Omega)$ for each $t\geq 0$, and hence we have $L_\Omega u(x,t)=\Delta u(x,t)$ for all $x\in\Omega$ and all $t>0$.  Hence the function $u(x,t)$ also} solves the differential equation $\partial_t u(x,t)=\Delta u(x,t)$, with the same initial condition $u(x,0)=f(x)$, \fe{at every point $(x,t)\in\Omega\times(0,\infty)$}. {One can extend the solution $u(x,t)$ to an element of $C_0({\mathbb R}^1)$ by setting $u(x,t)=0$ for $x\notin \Omega$.  However, this function cannot be \fe{twice differentiable in $x$} at the boundary points $x=0,M$, otherwise $u(x,t)$ would be another solution to \eqref{CPremarkEq} on $C_0({\mathbb R}^1)$, which would violate uniqueness.  }
\end{example}

\begin{example}\label{Ex1new}
Here we compute the generator of a killed stable process $X_t$ on $\R$ with index $1<\alpha<2$ in terms of fractional derivatives, see Theorem \ref{th:stable1d}.
Given a suitable function $f:\R\to\R$, the {\it generator form} of the positive fractional derivative is defined by
\begin{equation}\label{RL2p}
{\mathbf D}^\alpha_{(-\infty,x]} f(x):=\frac{\alpha(\alpha-1)}{\Gamma(2-\alpha)}\int_0^\infty \left[f(x-y)-f(x)+yf'(x)\right]y^{-1-\alpha}dy
\end{equation}
for $1<\alpha<2$ \cite[Eq.\ (2.18)]{FCbook}.
The {generator form} of the negative fractional derivative is defined by
\begin{equation}\label{RL2n}
{\mathbf D}^\alpha_{[x,\infty)} f(x):=\frac{\alpha(\alpha-1)}{\Gamma(2-\alpha)}\int_0^\infty \left[f(x+y)-f(x)-yf'(x)\right]y^{-1-\alpha}dy
\end{equation}
for $1<\alpha<2$ \cite[Eq.\ (3.33)]{FCbook}.  After a change of variables $y\mapsto -y$, it is not hard to see that these are special cases of the formula \eqref{PDO}.

The generator of any $\alpha$-stable semigroup on $\R$ with index $1<\alpha<2$ can be written as
\begin{equation}\label{GenEx1new2}
L f(x)=-af'(x)+\int_{y\neq 0} \left[f(x-y)-f(x)+yf'(x)\right]\phi(dy)
\end{equation}
with
\begin{equation}\label{LevyMeasStableDay10}
\phi(dy)=\begin{cases} \displaystyle{b\frac{\alpha(\alpha-1)}{\Gamma(2-\alpha)} y^{-\alpha-1}dy} &\text{for $y>0$, and}\\[12pt]
\displaystyle{c\frac{\alpha(\alpha-1)}{\Gamma(2-\alpha)} |y|^{-\alpha-1}dy}  &\text{for $y<0$.}
\end{cases}
\end{equation}
and a computation \cite[Example 3.24]{FCbook} shows that
\begin{equation}\label{FADE2}
Lf(x)=-a\partial_x f(x)+b {\mathbf D}^\alpha_{{(-\infty,x]}} f(x) +c {\mathbf D}^\alpha_{{[x,\infty)}}f(x) .
\end{equation}

The fractional partial differential equation $\partial_t u=Lu$ with generator \eqref{FADE2} is useful for modeling anomalous diffusion, where a cloud of particles spreads at a faster rate than a Brownian motion (the special case $\alpha=2$).  Figure \ref{MADEloglog} shows a typical application from Benson et al.\ \cite{Benson2001}.  The $\alpha$-stable densities that solve this fractional diffusion equation with $\alpha=1.1$, $a=0.12$ m/day, $b=0.14$ m$^\alpha$/day, and $c=0$ fit measured concentrations in an underground aquifer.  The best Gaussian solution gives a very poor fit on the leading tail, and hence significantly underestimates the risk of downstream contamination.

\begin{figure}[htb]
\begin{center}
  \includegraphics[width=5.0in]{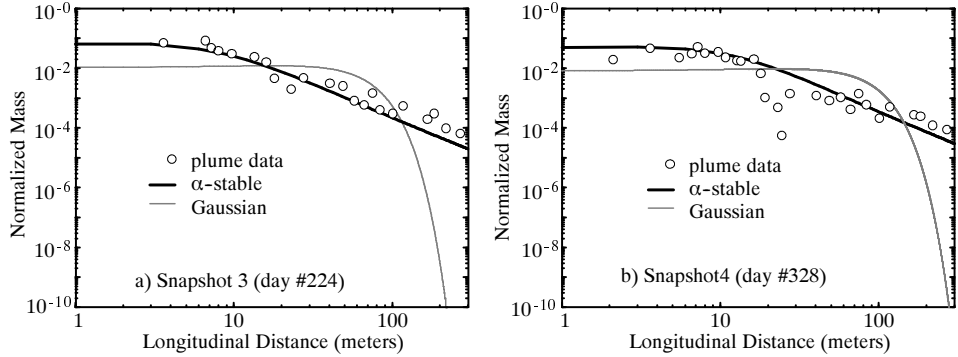}
\caption{Concentration measurements and fractional diffusion model \eqref{FADE2} from Benson et al.\ \cite{Benson2001}.   }
\label{MADEloglog}
  \end{center}
\end{figure}

An important open problem for fractional diffusion modeling is to identify the appropriate governing equation and boundary conditions on a bounded domain $\Omega=(L,R)$, see Defterli et al. \cite{FCAAnonlocal} for additional discussion.  The next result solves this problem in the case of zero Dirichlet boundary conditions.

\begin{theorem}\label{th:stable1d}
Assume that $X_t$ is a stable L\'evy process on $\R^1$ with generator \eqref{GenEx1new2} and L\'evy measure \eqref{LevyMeasStableDay10} for some $1<\alpha<2$. Let $\Omega=(L,R)$. Then the killed generator \eqref{GenDefOmega} is given by $L_\Omega f(x)=L^p f(x)$ for all $x\in\Omega$, for any $f\in C^2_0(\Omega )$ such that $L^pf(x)\in C_0(\Omega)$, where
\begin{equation}\begin{split}\label{GenEx1newOmega2}
L^p f(x)&=-a\partial_x f(x)+\int_{-\infty}^{\infty} \left[f(x-y)-f(x)+yf'(x)\right]\phi(dy)\\
&=-a\partial_x f(x)+b\,{\mathbb D}^\alpha_{[L,x]} f(x)+c\,{\mathbb D}^\alpha_{[x,R]} f(x) ,
\end{split}\end{equation}
using the Riemann-Liouville fractional derivatives \eqref{RLdef}.
The integral formula in \eqref{GenEx1newOmega2} is applied to the zero extension of $f\in C^2_0(\Omega )$, a function $f\in C_0(\R)$ defined by setting $f(x)=0$ for $x\notin\Omega$.
\end{theorem}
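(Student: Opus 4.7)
The plan is as follows. First, I will verify that the stable L\'evy process $X_t$ on $\R^1$ falls within the hypotheses of Theorem \ref{masterCor}. By Remark \ref{RemHawkes}, $X_t$ is doubly Feller since its one-dimensional distributions admit smooth Lebesgue densities (the stable densities), and $C_c^\infty(\R)\subset C_0^2(\R)\subset \D(L)$ by Sato \cite[Theorem 31.5]{SAT}. Hence Theorem \ref{masterCor} applies, and for any $f\in C^2_0(\Omega)$ (extended by zero to $\R$) such that $L^p f\in C_0(\Omega)$, we have $f\in \D(L_\Omega)$ and $L_\Omega f(x)=L^p f(x)$ for every $x\in\Omega$. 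In view of \eqref{GenEx1new2}, this already establishes the first equality in \eqref{GenEx1newOmega2}. The task that remains is to verify that the generator-form integral, applied to the zero extension of $f$, reduces to the pair of Riemann--Liouville derivatives plus the drift.

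Next, I will compute the positive (jump) part explicitly. Split
\[
\int_0^\infty\!\big[f(x-y)-f(x)+yf'(x)\big]y^{-1-\alpha}\,dy=\int_0^{x-L}(\cdots)\,dy+\int_{x-L}^\infty\!\big[-f(x)+yf'(x)\big]y^{-1-\alpha}\,dy,
\]
using that the zero extension forces $f(x-y)=0$ for $y>x-L$. The tail integral is elementary. On the inner integral I will integrate by parts twice with $dv=y^{-1-\alpha}\,dy$ and then $dv=y^{-\alpha}\,dy$, using the vanishing of the Taylor remainder $f(x-y)-f(x)+yf'(x)=O(y^2)$ at $y=0$ to kill the boundary terms there, and the fact that $f(L)=0$ (since $f\in C_0^2(\Omega)$) to simplify boundary terms at $y=x-L$. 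After combining with the tail, the contributions involving $f(x)$ and $f'(x)$ cancel exactly (due to the coefficient algebra $1/(\alpha-1)-1/\alpha=1/[\alpha(\alpha-1)]$), leaving
\[
\frac{\alpha(\alpha-1)}{\Gamma(2-\alpha)}\int_0^\infty\!\big[f(x-y)-f(x)+yf'(x)\big]y^{-1-\alpha}\,dy=\frac{1}{\Gamma(2-\alpha)}\left[(x-L)^{1-\alpha}f'(L)+\int_0^{x-L}\!y^{1-\alpha}f''(x-y)\,dy\right].
\]
A direct differentiation-under-the-integral computation of $\mathbb{D}_{[L,x]}^\alpha f(x)$, starting from \eqref{RLdef} with $n=2$, the substitution $z=x-y$, and using $f(L)=0$, gives exactly the same right-hand side. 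Thus the positive-jump portion of $L^p f(x)$ equals $b\,\mathbb{D}^\alpha_{[L,x]}f(x)$; the mirror argument applied to $y<0$ (substituting $y\mapsto -y$ and using $f(R)=0$) yields $c\,\mathbb{D}^\alpha_{[x,R]}f(x)$ for the negative-jump portion.

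Finally, the drift term $-af'(x)$ is just the $l(x)\cdot\nabla f(x)$ component of \eqref{PDO} for the centered-stable generator \eqref{GenEx1new2}, so no further work is needed there. Putting everything together produces the second equality in \eqref{GenEx1newOmega2}.

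I expect the main obstacle to be the bookkeeping in the integration-by-parts step: the intermediate boundary terms at $y=x-L$ involve $(x-L)^{-\alpha}f(x)$ and $(x-L)^{1-\alpha}f'(x)$, which individually blow up as $x\downarrow L$ but must be shown to cancel, while the surviving $f'(L)(x-L)^{1-\alpha}$ term must be matched against the analogous boundary contribution in the definition of the Riemann--Liouville derivative. A careful bookkeeping of the boundary terms under the assumption $f\in C^2_0(\Omega)$ (recalling that $f(L)=f(R)=0$ but one-sided derivatives at the endpoints need not vanish) is the only delicate point; everything else is a direct application of Theorem \ref{masterCor}.
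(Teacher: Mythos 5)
Your overall strategy is the same as the paper's: invoke Theorem \ref{masterCor} via Remark \ref{RemHawkes} to reduce the problem to a pointwise identity between the generator-form integral of the zero extension and the Riemann--Liouville derivatives, then verify that identity by splitting the jump integral at $y=x-L$ and integrating by parts. However, there are two genuine gaps.

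First, you never verify that $\Omega=(L,R)$ is regular, which is a standing hypothesis of Theorem \ref{masterCor} and is not automatic for stable processes: Remark \ref{RegularityStable} shows that for $0<\alpha<1$ the interval fails to be regular unless $b>0$, $c>0$ and $a=0$. The paper supplies this step by noting that for $1<\alpha<2$ single points are hit immediately, $\p^x(T_x=0)=1$ (Sato, Example 43.22), so both endpoints are regular. Without some such argument the appeal to Theorem \ref{masterCor} is not justified.

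Second, and more substantively, your second integration by parts produces the boundary value $f'(L)$ and the integral $\int_0^{x-L}y^{1-\alpha}f''(x-y)\,dy$. But the theorem is stated for $f\in C_0^2(\Omega)$ as defined in this paper, where the first and second derivatives are only required to be continuous \emph{inside} $\Omega$ and may blow up at the boundary; the paper stresses this explicitly before Theorem \ref{masterCor}, and Remark \ref{HarishRemark} exhibits relevant elements of $\D(L_\Omega)$ behaving like $x^{\alpha-1}$ near an endpoint, for which $f'(L)$ does not exist and $f''$ is not integrable up to $L$. So your identity
\begin{equation*}
\frac{\alpha(\alpha-1)}{\Gamma(2-\alpha)}\int_0^\infty\bigl[f(x-y)-f(x)+yf'(x)\bigr]y^{-1-\alpha}\,dy
=\frac{1}{\Gamma(2-\alpha)}\Bigl[(x-L)^{1-\alpha}f'(L)+\int_0^{x-L}y^{1-\alpha}f''(x-y)\,dy\Bigr]
\end{equation*}
is correct (I checked it, e.g.\ on $f(z)=z$ near $L=0$) only for $f$ with $f'$ and $f''$ extending continuously and integrably to the closed interval, which is a strictly smaller class than the theorem covers. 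The paper avoids this by stopping after \emph{one} integration by parts, arriving at \eqref{I1calc}, which involves only $f'$ at interior points, and then matching it to ${\mathbb D}^\alpha_{[L,x]}f$ through the factorization ${\mathbb D}^\alpha_{[L,x]}=D^1{\mathbb D}^{\alpha-1}_{[L,x]}$ and the Caputo--Riemann--Liouville relation together with $f(L)=0$. You should replace your second integration by parts with that route (or otherwise restrict the class of $f$ and then extend by the approximation statement in Theorem \ref{masterCor}(2)), since as written the computation does not cover all $f$ in the statement of the theorem.
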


\begin{proof}
In order to apply the results of Section \ref{sec2}, we need to show that $\Omega$ is regular. For $x\in\RR$, define the first hitting time of $x$ by $T_x=\inf\left\{t>0:X_t=x\right\}$. Since $1<\alpha<2$, we have $\p^x(T_x=0)=1$ for all $x\in\RR$, see for example Sato \cite[Example 43.22, p. 325]{SAT}. This implies that the boundary points \TL{$L$ and $R$} are both regular for $\Omega$.  Since $X_t$ has a smooth density for any $t>0$ \cite[Theorem 7.2.7]{RVbook}, Remark \ref{RemHawkes} shows that Theorem \ref{masterCor} applies, and hence the killed generator is given by the formula \eqref{GenEx1new2} applied to the zero extension of a function $f\in C^2_0(\Omega )$.  For any such function, use \eqref{GenEx1new2} to write
\begin{equation}
L^p f(x)=-af'(x)+bI_1+cI_2
\end{equation}
where $$I_1=\int_0^\infty [f(x-y)-f(x)+yf'(x)]\frac{y^{-\alpha-1}}{\Gamma(-\alpha)}
\,dy$$ and $$I_2=\int_{-\infty}^0 [f(x-y)-f(x)+yf'(x)]\frac{|y|^{-\alpha-1}}{\Gamma(-\alpha)}
\,dy.$$
Write
\[I_1=\int_{x-L}^\infty [0-f(x)+yf'(x)]\frac{y^{-\alpha-1}}{\Gamma(-\alpha)}dy+\int_0^{x-L} [f(x-y)-f(x)+yf'(x)]\frac{y^{-\alpha-1}}{\Gamma(-\alpha)}dy\]
and integrate by parts, noting that  $f(x-y)-f(x)+yf'(x)=O(y^2)$ as $y\to 0$.  The remaining boundary terms from the two integrals cancel, and then a change of variable $y\mapsto x-y$ yields
\begin{equation}\begin{split}\label{I1calc}
I_1
=&f'(x)\frac{(x-L)^{1-\alpha}}{\Gamma(2-\alpha)}+\int_L^x [f'(y)-f'(x)]\frac{(x-y)^{-\alpha}}{\Gamma(1-\alpha)}\,dy.
\end{split}
\end{equation}
Write $D=d/dx$ and use \eqref{CaputoDef} to see that
\begin{equation*}\begin{split}
{\mathbb D}^\alpha_{[L,x]} f(x)&=D^2{\mathbb I}^{2-\alpha}_{[L,x]} f(x)
=D^1\left[D^1{\mathbb I}^{1-(\alpha-1)}_{[L,x]} f(x)\right]=D^1{\mathbb D}^{\alpha-1}_{[L,x]} f(x) .
\end{split}
\end{equation*}
The Caputo and Riemann-Liouville fractional derivatives are related by
\[
{\mathbb D}^{\alpha-1}_{[L,x]} f(x)=\partial^\alpha_{[L,x]} f(x)+f(L)\frac{(x-L)^{1-\alpha}}{\Gamma(2-\alpha)} ,
\]
see for example \cite[Eq.\ (2.4.6)]{Kilbas2006}.
Since $f(L)=0$, this implies that
\begin{equation*}\begin{split}
{\mathbb D}^\alpha_{[L,x]} f(x)&=D^1{\partial}^{\alpha-1}_{[L,x]} f(x)\\
&=\frac{d}{dx}\left[\int_L^{x} f'(y)\frac{(x-y)^{1-\alpha}}{\Gamma(2-\alpha)}\,dy\right]\\
&=\frac{d}{dx}\left[\int_L^{x} [f'(y)-f'(x)]\frac{(x-y)^{1-\alpha}}{\Gamma(2-\alpha)}\,dy\right]+\frac{d}{dx}\left[f'(x)\int_L^{x} \frac{(x-y)^{1-\alpha}}{\Gamma(2-\alpha)}\,dy\right]\\
&=\int_L^{x} [f'(y)-f'(x)]\frac{(x-y)^{-\alpha}}{\Gamma(1-\alpha)}\,dy+f'(x)\frac{d}{dx}\int_L^{x} \frac{(x-y)^{1-\alpha}}{\Gamma(2-\alpha)}\,dy ,
\end{split}
\end{equation*}
which reduces to \eqref{I1calc}.  Similarly, $I_2={\mathbb D}^\alpha_{[x,R]}f(x)$.
\end{proof}

Now the results stated in Section \ref{sec3} can be applied.   Suppose that $X_t$ is any stable process with index $1<\alpha<2$, specified by its generator \eqref{FADE2}.  \fe{Recall from Remark \ref{extension} that $L_\Omega$ is the unique continuous extension of \eqref{GenEx1newOmega2}.  In what follows, we will also denote this extension by $L_\Omega f(x)=-a\partial_x+b\,{\mathbb D}^\alpha_{[L,x]} f(x)+c\,{\mathbb D}^\alpha_{[x,R]} f(x)$.}  Then the function $u(x,t)=\Ex[f(X_{t})I\{\tau_\Omega<t\}]$ for $\Omega=(L,R)$ is the unique solution to the space-fractional Dirichlet problem
\begin{equation}\begin{split}\label{FADE2omega}
\partial_t u(x,t)&=-a\partial_x u(x,t)+b\, {\mathbb D}^\alpha_{[L,x]}u(x,t)+c\,{\mathbb D}^\alpha_{[x,R]} u(x,t)\quad \forall\  x\in\Omega,\ t>0\\
u(x,0)&=f(x)\quad \forall\ x\in \Omega;\\
u(x,t)&=0\quad \forall\ x\notin \Omega,t\geq 0
\end{split}\end{equation}
for any $f\in\D(L_\Omega)$, and the unique mild solution to \eqref{FADE2omega} for any $f\in C_0(\Omega)$.  If $u_1,u_2$ are the corresponding solutions to \eqref{FADE2omega} for initial functions $f_1,f_2$, then $\|u_2(t)-u_1(t)\|=\|P_t^\Omega(f_2-f_1)\|\leq \|f_2-f_1\|$ in the supremum norm, so the solution depends continuously on the initial condition.  Hence the Dirichlet problem \eqref{FADE2omega} is well posed.

Also, for any $0<\beta<1$ the function  $v(x,t)=\Ex[f(X_{E_t}^\Omega)]$ is the unique solution to the space-time fractional Dirichlet problem
\begin{equation}\begin{split}\label{tFADE2omega}
\partial_t^\beta v(x,t)&=-a\partial_x v(x,t)+b\, \BB{\mathbb D}^\alpha_{[l,x]}v(x,t)+c\,\BB{\mathbb D}^\alpha_{[x,r]} v(x,t)\quad \forall\  x\in\Omega,\ t>0\\
v(x,0)&=f(x)\quad \forall\ x\in \Omega;\\
v(x,t)&=0\quad \forall\ x\notin \Omega,t\geq 0
\end{split}\end{equation}
for any $f\in\D(L_\Omega)$, and the unique mild solution to \eqref{FADE2omega} for any $f\in C_0(\Omega)$.  Note that the process $X_{E_t}^\Omega$ is not Markov, and the family of operators $T_tf(x)=\Ex[f(X_{E_t}^\Omega)]$ is not a semigroup.  Write $v(x,t)$ in terms of $u(x,t)$ using \eqref{FCPsoln2}, where $h$ is given by \eqref{EtDens}.  Since $w\mapsto h(w,t)$ is the probability density function of the nonnegative random variable $E_t$, we have
\begin{equation}\begin{split}\label{tFADE2omegaNorm}
\|v(t)\|&=\sup_{x\in\Omega}\left|\int_0^\infty P_w^\Omega f(x)h(w,t)\,dw\right|\\
&\leq \int_0^\infty \|P_w^\Omega f\| h(w,t)\,dw\leq\|f\| \int_0^\infty  h(w,t)\,dw=\|f\|
\end{split}\end{equation}
using the fact that $\|P_t^\Omega f\|\leq \|f\|$ in the supremum norm on $C_0(\Omega)$.  It follows that the space-time fractional diffusion equation \eqref{tFADE2omega} is also well-posed.
\end{example}

\begin{example}\label{ex4}
The following is a typical example from numerical analysis, see for example \cite{2sided,CNfde}.
Consider the inhomogeneous fractional partial differential equation
\begin{equation}\label{FPDEex4}
\partial_t u(x,t) =b {\mathbb D}^\alpha_{[0,x]} u(x,t) +  c {\mathbb D}^\alpha_{[x,1]} u(x,t) + g(x,t)
\end{equation}
on a finite domain $\Omega=(0,1)$ with $1<\alpha<2$, positive coefficients $b\neq c$, initial condition $u(x,0) =0$ for all $x\in \Omega$, Dirichlet boundary conditions $u(0, t) = u(1,t) = 0$ for all $t\geq 0$, and forcing function
\begin{equation*}\begin{split}
g(x,t)&=x^2(1-x)^2-t\left[\frac{\Gamma(3)}{\Gamma(3-\alpha)}g_{2-\alpha}(x)-2\frac{\Gamma(4)}{\Gamma(4-\alpha)}g_{3-\alpha}(x)
                     +\frac{\Gamma(5)}{\Gamma(5-\alpha)}g_{4-\alpha}(x)\right]
\end{split}\end{equation*}
where $g_p(x)=ax^p+b(1-x)^p$.  Using the well-known formulae \cite[Example 2.7]{FCbook}
\begin{equation}\begin{split}\label{RLpowerlaw}
{\mathbb D}^\alpha_{[L,x]} (x-L)^p &= \frac {\Gamma(p+1)} {\Gamma(p+1-\alpha)}(x-L)^{p-\alpha}  \\
{\mathbb D}^\alpha_{[x,R]} (R-x)^p &= \frac {\Gamma(p+1)}{\Gamma(p+1-\alpha)} (R-x)^{p-\alpha}
\end{split}\end{equation}
for $p>\alpha$, it is easy to check that the exact solution is $u(x,t) = tx^2(1-x)^2$.  However, up to now, it was not known whether this solution was well-posed, or even unique, see \cite[Section 3]{FCAAnonlocal} for further details.

Since both $g(x,t)\in C_0(\Omega)$ and $\partial_t g(x,t)\in C_0(\Omega)$ for all $t\geq 0$, it follows from Example \ref{Ex1new} and \eqref{CPinhom} that this is the {\it unique} solution to the inhomogeneous Cauchy problem
\begin{equation}\begin{split}\label{FADE2omegaInhom}
\partial_t u(x,t)&=b\, {\mathbb D}^\alpha_{[0,x]}u(x,t)+c\,{\mathbb D}^\alpha_{[x,1]} u(x,t)+g(x,t)\quad \forall\  x\in\Omega,\ t>0\\
u(x,0)&=0\quad \forall\ x\in \Omega;\\
u(x,t)&=0\quad \forall\ x\notin \Omega,t\geq 0 .
\end{split}\end{equation}
Furthermore, uniqueness and \eqref{CpGenOmegaInhom} imply that $u(x,t)=\int_0^t \Ex[g(X_t^\Omega,t-s)]\,ds$.  Since the initial function $f(x)\equiv 0$, we certainly have $P_t^\Omega f\in C_0^2(\Omega)$ for all $t\geq 0$. Hence $u(x,t)$ is the unique solution to \eqref{FADE2omegaInhom} in the classical sense, \fe{i.e., the generator can be explicitly computed by the pointwise formulae \eqref{RLdef} for the Riemann-Liouville fractional derivatives}.  
\end{example}

\begin{remark}
An important question in the theory of fractional partial differential equations is how to write appropriate boundary conditions.  From the point of view of killed Markov processes, it is natural to impose the condition that $u(x,t)=0$ for all $ x\notin \Omega$ and all $t\geq 0$.  On the other hand, the problem \eqref{FPDEex4} only assumes $u(x, t) = 0$ for $x$ on the boundary of $\Omega$.  However, the problem \eqref{FPDEex4} as stated is indeed well-posed, because the definition of the Riemann-Liouville fractional derivative \eqref{RLdef} implicitly incorporates the zero exterior condition.
\end{remark}

\begin{remark}
In some applications, the Caputo fractional derivatives \eqref{CaputoDef} in the spatial variable $x$ are used instead of the Riemann-Liouville.  For the problem \eqref{FPDEex4}, these two forms are equivalent, because both $u(x,t)$ and $\partial_x u(x,t)$ vanish at the boundary, see for example Podlubny \cite[Eq.\ (2.165)]{Podlubny}.
\end{remark}

\TL{
\begin{remark}\label{RegularityStable}
The generator of an $\alpha$-stable L\'evy process $X_t$ on $\R$ with index $0<\alpha<1$ can be written in the form
\begin{equation}\label{GenEx1new1}
L f(x)=-af'(x)+\int_{y\neq 0} \left[f(x-y)-f(x)\right]\phi(dy)
\end{equation}
where
\begin{equation}\label{LevyMeasStable1}
\phi(dy)=\begin{cases} \displaystyle{b\,\frac{\alpha}{\Gamma(1-\alpha)} y^{-\alpha-1}dy} &\text{for $y>0$, and}\\[12pt]
\displaystyle{c\,\frac{\alpha}{\Gamma(1-\alpha)} |y|^{-\alpha-1}dy}  &\text{for $y<0$.}
\end{cases}
\end{equation}
Using the {generator form} of the positive fractional derivative \cite[Eq.\ (2.15)]{FCbook}
\begin{equation}\label{RL1p}
{\mathbf D}^\alpha_{[-\infty,x]} f(x):=\frac{\alpha}{\Gamma(1-\alpha)}\int_0^\infty \left[f(x)-f(x-y)\right]y^{-1-\alpha}dy
\end{equation}
and the negative fractional derivative \cite[Eq.\ (3.31)]{FCbook}
\begin{equation}\label{RL1n}
{\mathbf D}^\alpha_{[x,\infty]} f(x):=\frac{\alpha}{\Gamma(1-\alpha)}\int_0^\infty \left[f(x)-f(x+y)\right]y^{-1-\alpha}dy
\end{equation}
for $0<\alpha<1$, we can also write
\begin{equation}\label{FADE1}
Lf(x)=-a\partial_x f(x)-b {\mathbf D}^\alpha_{{[-\infty,x]}} f(x) -c {\mathbf D}^\alpha_{{[x,\infty]}}f(x) ,
\end{equation}
see \cite[Example 3.24]{FCbook} for details.
The question whether $\Omega=(L,R)\subset\RR$ is regular can be answered in terms of the first passage time of $X_t$, which is defined by
$$
T_{(x,\infty)}=\inf\left\{t>0:X_t>x\right\},\quad x\in\RR.
$$
Since $X_t$ is continuous in probability, it follows that $R$ is regular for $\Omega$ if and only if ${\mathbb P}^R(T_{(R,\infty)}=0)=1$, and the regularity of $L$ can be described analogously in terms of $T_{(-\infty,L)}$.  It follows using \cite[Theorem 47.6]{SAT}  that $\Omega$ is
regular if and only if $b>0$, $c>0$ and $a=0$.  Then an argument similar to Theorem \ref{th:stable1d} shows that
the generator of the killed stable L\'evy process is given by
\begin{equation}\begin{split}\label{GenEx1newOmega1}
L_\Omega f(x)&=-b\,\partial^\alpha_{[L,x]} f(x)-c\,\partial^\alpha_{[x,R]} f(x)
\end{split}\end{equation}
for all $x\in\Omega$, for any $f\in C^2_0(\Omega )$ such that the right-hand side of \eqref{GenEx1newOmega1} belongs to $C_0(\Omega)$.
It also follows from \cite[Theorem 47.6]{SAT}  that $\Omega$ is always regular for $X_t$ when $\alpha=1$.  One can also compute the generator of the corresponding killed process on $\Omega$, but the formula is more complicated, because the centering term $f'(x)yI_{B_1}(y)$ in \eqref{PDO} cannot be simplified.
\end{remark}
}

\begin{remark}\label{HarishRemark}
Suppose that $c=0$ and $1<\alpha<2$ in \FE{\eqref{GenEx1newOmega2}}.  Then Theorem 3.4.4 and Theorem 4.3.3  in the recent PhD thesis of Sankaranarayanan \cite{Sankaranarayanan2014} show that the domain of the killed generator $L_\Omega$ for $\Omega=(0,1)$ can be characterized completely as
$$D(L_\Omega)=\left\{f\in C_0(\Omega):f={\mathbb I}_{[0,x]}^{\alpha}g-x^{\alpha-1}{\mathbb I}_{[0,x]}^{\alpha}g(1)\ \exists\ g\in C_0(\Omega)\right\} .$$
The second term $x^{\alpha-1}{\mathbb I}_{[0,x]}^{\alpha}g(1)$ ensures that $f(1)=0$.  Then $L_\Omega f\, \fe{=b{\mathbb D}^\alpha_{[0,x]}f}=g$, since \fe{${\mathbb D}^\alpha_{[0,x]}{\mathbb I}^\alpha_{[0,x]}f=f$ for all $f\in C_0(\Omega)$ \cite[Eq.\ (2.106)]{Podlubny}, and}
\[L_\Omega[x^{\alpha-1}]=bD^2{\mathbb I}^{2-\alpha}_{[0,x]}x^{\alpha-1}=bD^2\left[\frac{\Gamma(\alpha)}{\Gamma(2-\alpha)} x\right]=0\]
for all $x\in(0,1)$.  Hence the point-wise formula \eqref{GenEx1newOmega2} for $L_\Omega f(x)$ is valid for all $f\in D(L_\Omega)$ in this case.
\end{remark}

\begin{example}
Meerschaert and Tadjeran \cite{2sided} consider
\begin{equation}\label{FPDE}
\partial_t u(x,t) =a(x) {\mathbb D}^{1.8}_{[0,x]} u(x,t) +  b(x) {\mathbb D}^{1.8}_{[x,2]} u(x,t) + g(x,t)
\end{equation}
on a finite domain $0 < x < 2$ and $t > 0$ with the coefficient functions
\[a(x) = \Gamma(1.2) \; x^{1.8} \quad\text{and}
\quad b(x) = \Gamma(1.2) \; (2-x)^{1.8} ,\]
the forcing function
\[
g(x,t) = - 32 e^{-t}[x^2+(2-x)^2-2.5\bigl(x^3+(2-x)^3 \bigr) +\;\;\frac{25}{22}(x^4+(2-x)^4)] ,
\]
initial condition $u(x,0) = 4 x^2 (2-x)^2$, and Dirichlet boundary conditions $u(0, t) = u(2,t) = 0$.
Using \ref{RLpowerlaw}, is easy to check that $u(x,t) = 4 e^{-t}x^2(2-x)^2$ is the exact solution.  This test problem is used in \cite{2sided} to demonstrate the effectiveness of an implicit Euler solution method.  The method is proven to be unconditionally stable and consistent, and hence convergent, but whether the problem is well-posed is an open question, see Defterli et al.\ \cite{FCAAnonlocal} for additional discussion.  The operator $L=a(x,t) {\mathbb D}^{1.8}_{[-\infty,x]}  +  b(x,t) {\mathbb D}^{1.8}_{[x,\infty]} $ can be computed from \eqref{PDO} with $c=l=Q=0$ and
\begin{equation}\label{LevyMeasEx5}
N(x,dy)=c(x,y)\frac{\alpha(\alpha-1)}{\Gamma(2-\alpha)}|y|^{-\alpha-1}dy, \quad c(x,y)=b(x)I(y>0)+a(x)I(y<0).
\end{equation}
However, it is not known whether this stable-like operator generates a Markov process on $\R$.  In particular, the coefficients do not satisfy the usual growth conditions for a stochastic differential equation, see \cite[Theorem A.1]{Chakraborty}.  We can, however, prove uniqueness using the following well-known result.

\begin{proposition}\label{prop:unique}
{Suppose that $\Omega$ is a bounded domain in $\Rd$, and $F(r)\geq F(s)$ for $r\leq s$.  Define the operator $If(x)=F(Lf(x))$ where $Lf(x)$ is given by \eqref{PDO}.  If $u,v$ are two solutions to
\begin{equation}\begin{split}\label{e:4.3inhom}
\partial_t u(x,t) +I u(x,t)&= 0; \quad  x\in \Omega, \ 0<t<T\\
u(x,t)&=h(t,x), \quad  x\notin  \Omega, \ 0<t<T, \\
u(x,0)& = f(x), \quad x\in \Omega
\end{split}\end{equation}
for some $T>0$, then $u(x,t)=v(x,t)$ for all $x\in \Rd$ and all $t\geq 0$.}
\end{proposition}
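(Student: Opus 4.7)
My plan is to prove uniqueness by a parabolic maximum principle argument, exploiting the positive maximum principle for the L\'evy-type operator \eqref{PDO} together with the non-increasing property of $F$. I would set $w := u - v$; then $w(x,0) = 0$ for $x \in \Omega$ and $w(x,t) = 0$ for $x \notin \Omega$, $t \in [0,T]$ by the shared initial and exterior data. It suffices to prove $w \leq 0$, since swapping the roles of $u$ and $v$ then gives the reverse inequality.

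Arguing by contradiction, I would suppose $M := \sup w > 0$ and introduce the perturbation $\tilde w(x,t) := w(x,t) - \epsilon t$ for some fixed $0 < \epsilon < M/T$. Then $\sup \tilde w > 0$, and since $\tilde w \leq 0$ on the set $(\Omega \times \{0\}) \cup (\Omega^c \times [0, T])$, the continuous function $\tilde w$ attains its positive maximum at some interior point $(x_0, t_0)$ with $x_0 \in \Omega$ and $t_0 \in (0, T]$. The one-sided time derivative at this maximum gives $\partial_t \tilde w(x_0, t_0) \geq 0$, hence $\partial_t w(x_0, t_0) \geq \epsilon > 0$. Moreover, since $\tilde w(x, t_0) = -\epsilon t_0 < 0$ for all $x \notin \Omega$, the spatial maximum of $\tilde w(\cdot, t_0)$ over all of $\R^d$ is attained at $x_0$ and is strictly positive.

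The next step will be to apply the positive maximum principle for the pseudodifferential operator \eqref{PDO}: at such a global positive maximum, each of the four terms is nonpositive---the killing term $-c(x_0)\tilde w(x_0, t_0)$ because $c \geq 0$, the drift term because $\nabla\tilde w(\cdot, t_0)$ vanishes at $x_0$, the diffusion term $\nabla\cdot Q(x_0)\nabla\tilde w(x_0, t_0)$ because $Q(x_0)$ is positive definite and the Hessian is negative semidefinite, and each jump integrand $\tilde w(x_0 + y, t_0) - \tilde w(x_0, t_0)$ because $x_0$ is a global spatial maximum. Hence $L\tilde w(x_0, t_0) \leq 0$. Formally $L(1) = -c$, so that $L\tilde w(x,t) = Lw(x,t) + \epsilon t\, c(x)$, and I would conclude $Lw(x_0, t_0) \leq -\epsilon t_0 c(x_0) \leq 0$, which yields $Lu(x_0, t_0) \leq Lv(x_0, t_0)$. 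The non-increasing property of $F$ then gives $F(Lu(x_0,t_0)) \geq F(Lv(x_0,t_0))$.

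Finally, combining the equations $\partial_t u = -F(Lu)$ and $\partial_t v = -F(Lv)$, I would obtain $\partial_t w(x_0, t_0) = F(Lv(x_0,t_0)) - F(Lu(x_0,t_0)) \leq 0$, contradicting $\partial_t w(x_0, t_0) \geq \epsilon > 0$. Therefore $w \leq 0$, and by symmetry $w \equiv 0$. The main technical obstacle will be regularity: the pointwise application of \eqref{PDO} and the classical maximum principle at $(x_0, t_0)$ requires $u(\cdot, t_0)$ and $v(\cdot, t_0)$ to be twice continuously differentiable in a neighborhood of $x_0$, with $u, v$ differentiable in $t$ at $(x_0, t_0)$; this regularity is implicit in the notion of a classical solution of \eqref{e:4.3inhom} being used here.
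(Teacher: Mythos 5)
Your argument is correct and is essentially the paper's proof: the paper perturbs $u$ by $-\delta/(T-t)$ instead of perturbing $u-v$ by $-\epsilon t$, locates a positive interior maximum of the difference, applies the positive maximum principle to the nonlocal operator \eqref{PDO}, and invokes the monotonicity of $F$ to contradict the sign of the time derivative, exactly as you do. The only (trivially repairable) wrinkle in your version is that the maximum may land at $t_0=T$, where the equation in \eqref{e:4.3inhom} is not assumed to hold; the paper's singular perturbation $-\delta/(T-t)$ forces $t_0<T$ automatically, whereas in your setup one should first restrict to a time interval $[0,T']$ with $T'<T$ on which $\sup w$ is still positive.
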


\begin{proof}
{[Thanks to Andrzej Swiech] Suppose that $u(y,s)>v(y,s)$ at some point $y\in \Omega$ and $0<s< T$.  For $\delta>0$, define
\[u^\delta(x,t):=u(x,t)-\frac{\delta}{T-t} .\]
If $\delta>0$ is sufficiently small, then $u^\delta(y,s)-v(y,s)>0$, and hence the function $u^\delta(x,t)-v(x,t)$ attains its positive maximum at some point $(x,t)\in \Omega\times(0,T)$.  Then at this point we have {$\partial_t u^\delta(x,t)=\partial_t v(x,t)$, and $\nabla u^\delta(x,t)=\nabla v(x,t)$}.  Since $u^\delta(x+z,t)-v(x+z,t)\leq u^\delta(x,t)-v(x,t)$, we also have $u^\delta(x+z,t)-u^\delta(x,t)\leq v(x+z,t)-v(x,t)$, and it follows that $Lu^\delta(x,t)\leq  Lv(x,t)$.  Hence $Iu^\delta(x,t)\geq Iv(x,t)$.  Thus we obtain
\[0=\partial_t v(x,t) +I v(x,t)\leq \partial_t u^\delta(x,t) +I u^\delta(x,t)=\frac{-\delta}{(T-t)^2} \]
which is a contradiction.
}\end{proof}

\fe{Since we know that $u(x,t) = 4 e^{-t}x^2(2-x)^2$ solves the Dirichlet problem \eqref{FPDE}, we can apply Proposition \ref{prop:unique} with $F(u)=-u$ to show that this solution is unique.  Hence the numerical method in \cite{2sided} indeed converges to the unique solution, which resolves an open question in that paper.}
\end{example}

\begin{example}\label{ex2new}
The generator of any $\alpha$-stable semigroup on $\Rd$ with index $1<\alpha<2$ can be written in the form
\begin{equation}\label{GenEx2new2}
L f(x)=-a\nabla f(x)+\int_{y\neq 0} \left[f(x-y)-f(x)+y\cdot\nabla f(x)\right]\phi(dy)
\end{equation}
where
\begin{equation}\label{LevyMeasStableRd2}
\phi(dy)=b\,\frac{\alpha(\alpha-1)}{\Gamma(2-\alpha)} r^{-\alpha-1}dr\,M(d\theta)
\end{equation}
in polar coordinates $r=|y|$ and $\theta=y/|y|$, where the spectral measure $M(d\theta)$ is any probability measure on the unit sphere. A calculation shows that
\begin{equation}\label{FADE2Rd}
Lf(x)=-a\nabla f(x)+ b\nabla^\alpha_M f(x) ,
\end{equation}
where the vector fractional derivative is defined by
\[\nabla^\alpha_M f(x)=\int_{|\theta|=1} {\mathbf D}_\theta^\alpha f(x)\,M(d\theta)\]
and ${\mathbf D}_\theta^\alpha$ the fractional directional derivative, i.e., the one dimensional fractional derivative ${\mathbf D}^\alpha_r g(r)$ (in generator form) of the projection $g(r)=f(x+r\theta)$ for $r\in \R$.  See \cite[Example 6.29]{FCbook} for complete details.

If $M$ is uniform over the sphere, it follows that \BB{$\nabla^\alpha_M f(x)=-c_{d,\alpha}(-\Delta)^{\alpha/2}f(x)$}, where the fractional Laplacian \BB{$-(-\Delta)^{\alpha/2}f(x)$} has Fourier transform $-\|k\|^\alpha \hat f(k)$, and \BB{
\[c_{d,\alpha}=|\cos(\pi\alpha/2)|\int_{\|\theta\|=1} |\theta_1|^\alpha M(d\theta)\]}
where $\theta=(\theta_1,\ldots,\theta_d)$, see \cite[Example 6.24]{FCbook}.

For any stable L\'evy process with index $1<\alpha<2$, Remark \ref{RemHawkes} shows that Theorem \ref{masterCor} applies for any regular bounded domain $\Omega\subset\Rd$, and hence the killed generator is given by the same formula \eqref{GenEx2new2} applied to the zero extension a function $f\in C^2_0(\Omega)$.  Now suppose that $\Omega$ is a \fe{convex} domain, so that for every $x\in\Omega$ and $|\theta|=1$ there exists a unique $R=R(x,\theta)>0$ such that $x-r\theta\in\Omega$ for $0<r<R$, and $x-r\theta\notin\Omega$ for $r>R$. Let $C=b \alpha(\alpha-1)/\Gamma(2-\alpha)$.  A change of variable $y=r\theta$ in polar coordinates yields
\[
L_\Omega f(x)=-a\nabla f(x)+\int_{|\theta|=1} \int_0^{\infty}\left[f(x-r\theta)-f(x)+r\theta\cdot\nabla f(x)\right]C r^{-\alpha-1}dr\,M(d\theta)
\]
for any $f\in C^2_0(\Omega)$ such that the right-hand side belongs to $C_0(\Omega)$.  Then the same one dimensional calculation on the inner integral as in Example \ref{Ex1new} leads to
\begin{equation}\label{FADE2RdOmega}
L_\Omega f(x)=-a\nabla f(x)+ b\nabla^\alpha_{M,\Omega} f(x)
\end{equation}
where
\begin{equation}\label{Ex2new2FracDvtOmega}
\nabla^\alpha_{M,\Omega} f(x)=\int_{|\theta|=1} {\mathbb D}^\alpha_{[x-R(x,\theta),x],\theta}f(x)\, M(d\theta) ,
\end{equation}
and  ${\mathbb D}^\alpha_{[x-R,x],\theta}f(x)$ is the Riemann-Liouville fractional directional derivative, defined as the one dimensional Riemann-Liouville derivative $\partial^\alpha_{[x-R,x]}g(r)$ of the projection $g(r)=f(x+r\theta)$.  Note that $g'(r)=\theta\cdot\nabla f(x+r\theta)$.

Then for any $0<\beta< 1$ the function  $v(x,t)=\Ex[f(X_{E_t}^\Omega)]$ is the unique solution to the Dirichlet problem
\begin{equation}\begin{split}\label{ex2newTFADE}
\partial_t^\beta v(x,t)&=-a\partial_x v(x,t)+b\nabla^\alpha_{M,\Omega}v(x,t)\quad \forall\  x\in\Omega,\ t>0\\
v(x,0)&=f(x)\quad \forall\ x\in D;\\
v(x,t)&=0\quad \forall\ x\notin D,t\geq 0
\end{split}\end{equation}
for any $f\in\D(L_\Omega)$, and the unique mild solution to \eqref{FADE2omega} for any $f\in C_0(\Omega)$. Then the same argument as in Example \ref{Ex1new} shows that the space-time fractional diffusion equation \eqref{ex2newTFADE} is well-posed.  \fe{As in the previous examples, we understand that \eqref{FADE2RdOmega} represents the unique continuous extension to $\D(L_\Omega)$.}
\end{example}

\begin{remark}
Example \ref{ex2new} includes the fractional Laplacian as a special case.  Chen et al.\ \cite[Theorem 5.1]{CMN} established strong solutions to the space-time fractional diffusion equation with Dirichlet boundary conditions \eqref{ex2newTFADE} in the special case where $M(d\theta)$ is uniform over the sphere, i.e., the fractional Laplacian.
Here the function $u(x,t)$ is said to be a strong solution if for every $t>0$, $u(x,t)\in C_0(\Omega)$, \BB{$(-\Delta)^{\alpha/2} u(x,t)$} exists pointwise for every
$x\in \Omega$, the Caputo fractional derivative $\partial^\beta_t u(x,t)$ exists pointwise for every $t>0$ and $x\in \Omega$, \BB{$\partial^\beta_t u(x,t) =
-(-\Delta)^{\alpha/2} u(x,t)$} pointwise in $(0, \infty)\times \Omega$, and $\lim_{t\downarrow 0} u(x,t) =f(x)$ for every $x\in \Omega$.  The theorem assumes that the initial condition $f\in \D(L_\Omega^k)$ for some  $k>-1+({3d+4})/({2\alpha})$.
The proof of \cite[Theorem 5.1]{CMN} involves symmetric Dirichlet forms, and an eigenfunction expansion of the fractional Laplacian.  It seems difficult to extend that argument to the more general setting of Example \ref{ex2new}, since the generator $L$ of a stable process need not be \fe{self-adjoint}, so that standard spectral theory does not apply.
\end{remark}

\begin{example}\label{ex2}
Bass \cite{Bass88} introduced stable-like processes, where the order $\alpha(x)$ of the fractional derivative varies in space.  If $\alpha:\Omega\to [\alpha_1,\alpha_2]$ is a smooth bounded function for some $0<\alpha_1<\alpha_2<2$, then Schilling and Wang \cite[Theorem 3.3]{SW12} prove that the stable-like process $X_t$ on $\rd$ with generator \BB{$-(-\Delta)^{\alpha(x)/2}$} is doubly Feller.  If $\Omega$ is a regular bounded domain in $\Rd$, then Bass \cite[Theorem 2.1 and Remark 7.1]{Bass1988} shows that $X_t$ solves the martingale problem, i.e.,
\[f(X_t)-f(X_0)-\int_0^t Lf(X_s)\,ds\]
is a $\sigma\{X_s:0\leq s\leq t\}$-martingale for any $f\in C^2_b(\rd)$, the family of real-valued functions on $\rd$ such that $f$ and all its derivatives of order 1 or 2 are continuous and bounded.  Then it is easy to check, using the definition of the generator, that any function $f\in C_0^2(\Rd)$ is in $\D(L)$, where $L$ is given by \eqref{PDO} with  $c=l=Q=0$ and $N(x,dy)=c_{d,\alpha(x)}|y|^{-d-\alpha(x)}dy$ for any $f\in C_0^2(\Rd)$.  Then Theorem \ref{masterCor} shows that the generator of the killed process is given by the same pointwise formula applied to the zero extension of a function $f\in C_0^2(\Omega)$.  Then for any $0<\beta< 1$ the function  $v(x,t)=\Ex[f(X_{E_t}^\Omega)]$ is the unique solution to the Dirichlet problem
\begin{equation}\begin{split}\label{ex2TFADE}
\partial_t^\beta v(x,t)&=\BB{-(-\Delta)^{\alpha(x)/2}}v(x,t)\quad \forall\  x\in\Omega,\ t>0\\
v(x,0)&=f(x)\quad \forall\ x\in \Omega;\\
v(x,t)&=0\quad \forall\ x\notin \Omega,t\geq 0
\end{split}\end{equation}
for any $f\in\D(L_\Omega)$, and the unique mild solution to \eqref{ex2TFADE} for any $f\in C_0(\Omega)$. The same argument as in Example \ref{Ex1new} shows that the Dirichlet problem \eqref{ex2TFADE} is well-posed.  \fe{Here again, we define $-(-\Delta)^{\alpha(x)/2}f(x)$ using the zero extension of a function $f\in C_0(\Omega)$, and we have $f\in\D(L_\Omega)$ if the pointwise formula for $-(-\Delta)^{\alpha(x)/2}f(x)$ belongs to $C_0(\Omega)$. }
\end{example}

\begin{example}\label{ex3}
\mmm{Bass and Levin \cite{BL02} consider a different class of stable-like processes on $\rd$ with generator \eqref{PDO} where $c=l=Q=0$ and $N(x,dy)=\kappa(x,y)|y|^{-d-\alpha}dy$,  $0<\alpha<2$, $\kappa(x,y)=\kappa(x,-y)$, and $0<\kappa_1<\kappa(x,y)<\kappa_2<\infty$.  Here we assume that $\kappa(x,y)=a(x)c_{d,\alpha}$ where  $|a(x)-a(y)|\leq a_0 |x-y|^\lambda$ for some $0<\lambda<1$ and $a_0>0$.  Theorem 3.19 in B\"ottcher et al.\ \cite{BSW} establishes the existence of a time-homogeneous Feller process $X_t$ with this generator \BB{$L=-a(x) (-\Delta)^{\alpha/2}$}.  Chen and Zhang \cite[Eq. (1.18)]{CZ13} observe that $X_t$ solves the stochastic differential equation $dX_t=a(X_{t-})^{1/\alpha}dY_t$ where $Y_t$ is the standard symmetric stable L\'evy process with generator \BB{$L_Y=-(-\Delta)^{\alpha/2}$} for some $0<\alpha<2$.   It follows from \cite[Corollary 1.3]{CZ13} that the transition density $p_t(x,y)$ of $X_t$ (i.e., the Lebesgue probability density of $y=X_{t+s}$ given $X_s=x$) is locally bounded in $(x,y)\in\rd\times\rd$ for any $t>0$.}  It is easy to check that $T_t$ is a $C_b(\rd)$ semigroup (e.g., see discussion after \cite[Theorem 2.1]{SW12}) and then it follows from Schilling and Wang \cite[Corollary 2.2]{SW12} that $X_t$ is doubly Feller.  Then for any regular bounded domain $\Omega\subset \Rd$, Theorem \ref{masterCor} shows that the generator of the killed process $X_t^\Omega$ is given by the same formula: \fe{$L_\Omega f(x)=-a(x) (-\Delta)^{\alpha/2}f(x)$ for all $f\in C^2_0(\Omega)$ such that $-a(x) (-\Delta)^{\alpha/2}f(x)\in C_0(\Omega)$, where we define $f(x)=0$ for $x\notin\Omega$}.  Hence for any $0<\beta< 1$ the function  $v(x,t)=\Ex[f(X_{E_t}^\Omega)]$ is the unique solution to the Dirichlet problem
\begin{equation}\begin{split}\label{ex3TFADE}
\partial_t^\beta v(x,t)&=-a(x) \BB{(-\Delta)^{\alpha/2}}v(x,t)\quad \forall\  x\in\Omega,\ t>0\\
v(x,0)&=f(x)\quad \forall\ x\in \Omega;\\
v(x,t)&=0\quad \forall\ x\notin \Omega,t\geq 0
\end{split}\end{equation}
for any $f\in\D(L_\Omega)$, and the unique mild solution to \eqref{ex3TFADE} for any $f\in C_0(\Omega)$. The same argument as in Example \ref{Ex1new} shows that the Dirichlet problem \eqref{ex3TFADE} is well-posed.  \fe{Again, $-a(x) (-\Delta)^{\alpha/2}$ represents the unique continuous extension to $\D(L_\Omega)$, and $f\in\D(L_\Omega)$ if the pointwise formula for $-a(x) (-\Delta)^{\alpha/2}f(x)$ belongs to $C_0(\Omega)$. }
\end{example}

\begin{acknowledgment}
The authors would like to thank Krzysztof Bogdan, Wroc\l{}aw University of Technology, Zhen-Qing Chen, University of Washington, Ren\'e Schilling, Technical University of Dresden, and Andrzej Swiech, Georgia Institute of Technology, for helpful discussions. B.~Baeumer was supported by the Marsden Fund Council from Government funding, administered by the Royal Society of New Zealand.  M. M. Meerschaert was partially supported by ARO grant W911NF-15-1-0562 and NSF grants EAR-1344280 and DMS-1462156.
\end{acknowledgment}

\bibliographystyle{plain}

\end{document}